\documentclass[english]{amsart}

\usepackage{amsmath,amssymb,enumerate}
\usepackage{amscd,amsxtra,calc}

\usepackage[T1]{fontenc}
\usepackage[all]{xy}

\usepackage{babel}
\usepackage{amstext}
\usepackage{amsmath}
\usepackage{amsfonts}
\usepackage{latexsym}
\usepackage{ifthen}
\usepackage{verbatim}
\usepackage{enumerate}
\usepackage[stable]{footmisc}
\usepackage{xypic}
\xyoption{all}
\newcommand{\rk}{{\rm rk}}

\newtheorem{lemma1}{}[section]

\newenvironment{lemma}{\begin{lemma1}{\bf Lemma.}}{\end{lemma1}}
\newenvironment{example}{\begin{lemma1}{\bf Example.}\rm}{\end{lemma1}}

\newenvironment{theorem}{\begin{lemma1}{\bf Theorem.}}{\end{lemma1}}
\newenvironment{proposition}{\begin{lemma1}{\bf Proposition.}}{\end{lemma1}}
\newenvironment{corollary}{\begin{lemma1}{\bf Corollary.}}{\end{lemma1}}
\newenvironment{remark}{\begin{lemma1}{\bf Remark.}\rm}{\end{lemma1}}
\newenvironment{definition}{\begin{lemma1}{\bf Definition.}}{\end{lemma1}}

\newenvironment{conjecture}{\begin {lemma1}{\bf Conjecture.}}{\end{lemma1}}

\newenvironment{remarks}{\begin{lemma1}{\bf Remarks.}\rm}{\end{lemma1}}

\newenvironment{remark*}{{\bf Remark.}}{}
\newenvironment{example*}{{\bf Example.}}{}

\newcommand{\R}{\ensuremath{\mathbb{R}}}
\newcommand{\Q}{\ensuremath{\mathbb{Q}}}
\newcommand{\Z}{\ensuremath{\mathbb{Z}}}
\newcommand{\C}{\ensuremath{\mathbb{C}}}

\newcommand{\PP}{\ensuremath{\mathbb{P}}}
\newcommand{\D}{\ensuremath{\mathbb{D}}}

\newcommand{\Homsheaf} { \ensuremath{ \mathcal{H} \! om}}

\newcommand{\merom}[3]{\ensuremath{#1:#2 \dashrightarrow #3}}

\newcommand{\holom}[3]{\ensuremath{#1:#2  \rightarrow #3}}
\newcommand{\fibre}[2]{\ensuremath{#1^{-1} (#2)}}

\makeatletter
\ifnum\@ptsize=0  \fi \ifnum\@ptsize=2  \fi 

\newcommand\sF{{\mathcal F}}
\newcommand\sG{{\mathcal G}}
\newcommand\sH{{\mathcal H}}

\newcommand\sO{{\mathcal O}}

\DeclareMathOperator*{\nons}{nons}

\newcommand{\pic}[0]{\operatorname{Pic}}

\newcommand{\sing}[0]{\operatorname{sing}}

\newcommand{\NEX}{\overline{\mbox{NE}}(X)}

\usepackage{xcolor}

\usepackage{url}

\title{Fano varieties with split tangent sheaf}
\date{February 17, 2026}

\makeatletter
\@namedef{subjclassname@2020}{\textup{2020} Mathematics Subject Classification}
\makeatother

\subjclass[2020]{14J45, 14E30, 14F10, 14M22}
\keywords{Fano type variety, split tangent sheaf, foliation}

\author {Andreas H\"oring}

\address{Andreas H\"oring, Universit\'e C\^ote d'Azur, CNRS, LJAD, France}
\email{Andreas.Hoering@univ-cotedazur.fr}

\begin{document}

\begin{abstract} 
Let $X$ be a mildly singular Fano variety such that the tangent sheaf
is a direct sum. We show that the direct factors are algebraically integrable,
so the infinitesimal decomposition induces a product structure on a quasi-\'etale cover of $X$.
\end{abstract}

\maketitle

\section{Introduction}

\subsection{Main result}

We say that a normal projective variety $X$ has a split tangent sheaf if there are non-zero subsheaves $\sF_i \subset T_X$ such that
$T_X = \sF_1 \oplus \sF_2$. Varieties with this property appear naturally whenever one aims to prove a decomposition
theorem, e.g. for varieties with nef anticanonical class \cite{CH19, MW21}.
In the study of singular varieties of trivial canonical class \cite{GKP16c} and
$\Q$-Fanos admitting a K\"ahler-Einstein metric \cite{DGP24} the semistability of the tangent bundle yields decompositions $T_X \simeq \oplus_{i \in I} \sF_i$ where the direct factors are integrable. For an arbitrary Fano manifold the tangent bundle is not semistable and there are many examples of distributions that are not integrable. 
The main result of this paper shows that the existence of a splitting
always guarantees the integrability:

\begin{theorem} \label{thm:main}
Let $X$ be a normal $\Q$-factorial projective variety with klt singularities such that
$T_X=V_1 \oplus V_2$. If $X$ is of Fano type (cf. Definition \ref{defn:fano-type}), the distributions $V_i$ are integrable with algebraic leaves.
\end{theorem}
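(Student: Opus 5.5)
Let $V_i$ be one of the two factors and let $p_j\colon T_X\to V_j$ be the projection onto the other factor, $j\neq i$. I would reduce integrability to algebraicity of a suitable foliation, and get algebraicity from positivity, via a Bogomolov--McQuillan / Campana--P\u{a}un type criterion. The O'Neill tensor
\[
\omega_i\colon \wedge^2 V_i \to T_X/V_i \simeq V_j,\qquad u\wedge v\mapsto p_j([u,v]),
\]
is $\mathcal O_X$-linear, and $V_i$ is integrable if and only if $\omega_i=0$.

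\textbf{Step 1: a $K$-negative extremal ray.} Since $X$ is of Fano type, we may run an MMP for any divisor and the cone of curves is polyhedral. The plan is to pick a $K_X$-negative extremal ray $R$ with contraction $\varphi\colon X\to Y$ and to compare $R$ with $\det V_1$ and $\det V_2$. We have $-K_X=\det V_1+\det V_2$ as Weil divisors, and these are $\Q$-Cartier by $\Q$-factoriality. So on a general curve $C$ of a covering family of minimal rational curves in $R$, at least one of the $\det V_i$ is positive. On such a curve $T_X|_C$ is nef, and the splitting type of $T_X|_C$ together with $T_C\subset T_X|_C$ forces $T_C$ to lie, generically, in one factor. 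Here the argument of Araujo--Druel--Kov\'acs for splitting tangent bundles along minimal rational curves should transfer once I pass to the smooth locus, where general rational curves lie by the klt assumption. The ample summand $\mathcal O(2)$ of $T_X|_C$ then sits in, say, $V_1|_C$.

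\textbf{Step 2: integrability of the factor containing the rational curves.} On a general minimal curve $C$, $V_1|_C$ contains $\mathcal O(2)$ and the remaining summands are $\mathcal O(1)$'s and $\mathcal O$'s, while $V_2|_C$ is trivial-ish. By the standard lemma, $\omega_1|_C$ would map $\mathcal O(2)\otimes\mathcal O(1)$ into a bundle of degree $\le 1$ pieces. This gives vanishing on the "positive part", which shows that the foliation generated by the rational curves, namely the rationally connected quotient relative to $V_1$, is contained in $V_1$.

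I would then argue by induction on dimension. The contraction $\varphi$ (or a fibration obtained from the MMP with scaling) has fibres tangent to $V_1$. The Fano type and klt properties descend to $Y$, and the splitting descends to a splitting of $T_Y$ on a big open set: $V_2$ maps isomorphically, and $V_1$ maps to its image modulo the relative tangent sheaf. By induction, the images are algebraically integrable on $Y$. Pulling back, $V_2$ and $V_1$ are integrable, since $V_1$ is the preimage of an integrable sheaf together with the relative tangent directions, and the bracket of $\varphi$-vertical fields with lifts stays in $V_1$. The base case is $\dim Y=0$, where $X$ has Picard number one. There the same rational-curve argument applied to both factors contradicts the splitting unless one factor vanishes.

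\textbf{Step 3: algebraicity of leaves.} Once $V_i$ is a foliation on a Fano type variety, I would show that $-K_{V_i}$ restricted to a general complete intersection curve is positive, or directly build the leaves from the fibrations produced by the induction. The leaves of $V_1$ are the preimages of leaves on $Y$, hence algebraic. This uses the Campana--P\u{a}un criterion on the algebraic part.

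\textbf{Main obstacle.} The hardest point is the descent in Step 2: showing that when $\varphi$ is birational (divisorial or flipping), the splitting is preserved in a form to which the induction applies, and the klt $\Q$-factorial hypotheses survive. For flips I would first try to replace $X$ by a small $\Q$-factorial modification on which the relevant MMP consists only of Mori fibre spaces. Alternatively, I would try to prove directly that $\varphi$ has relative dimension equal to $\operatorname{rk}$ of the rational part of $V_1$, by showing that the exceptional locus of a birational contraction must be saturated by leaves of a non-integrable distribution.
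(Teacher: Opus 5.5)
\textbf{There is a genuine gap in your induction step.} You conclude that $V_2$ is integrable ``by pulling back''. Once $T_{X/Y}\subset V_1$ and $T_Y=W_1\oplus W_2$, only $V_1$ is a pull-back foliation, $V_1=\varphi^{-1}W_1$. The pull-back of $W_2$ has rank $\rk W_2+\dim X-\dim Y$ and is generically $T_{X/Y}\oplus V_2$. So $V_2$ is only a complement of the vertical directions inside it, i.e.\ a connection on $\varphi$ along the leaves of $W_2$. For lifts $u,v$ to $V_2$ of local sections of $W_2$, the $V_1$-component of $[u,v]$ is vertical, but nothing forces this curvature term to vanish.

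This is Beauville's mechanism, and Example~\ref{example-not-integrable} shows it really occurs. There $T_X=\sF\oplus T_{X/Y}$ over a klt rationally connected surface, $\sF$ maps isomorphically onto $T_Y$ (trivially integrable), and yet $\sF$ is not integrable. So a single $K_X$-negative ray, or a $K_X$-MMP, can at best give integrability of whichever factor contains $T_{X/Y}$; compare Proposition~\ref{prop:weak} and the remark after Proposition~\ref{prop:descend-MFS}. You do not get to choose that factor. Your ``say $V_1$'' is exactly where the argument loses half of the statement, and your Step~3 inherits the same problem for $V_2$.

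\textbf{The missing idea is to run an MMP adapted to each factor separately.} By Proposition~\ref{prop:fano-type}, $\det V_j\cdot H^{\dim X-1}>0$ for $j=1,2$ and $H$ ample, so $\det V_j^*$ is not pseudoeffective. Since $X$ is a Mori dream space, you can run a $\det V_j^*$-MMP ending in a Mori fibre space $f\colon X'\to Y$ with $\det V_j'$ $f$-ample, and with $X'$ and $Y$ of Fano type. Proposition~\ref{prop:inclusion-rel-tangent} then gives $T_{X'/Y}\subset V_j'$, so $V_j'=f^{-1}W_j$, and induction applies to the factor you actually want.

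\textbf{Two further remarks.} First, the birational steps you single out as the main obstacle are harmless. Divisorial contractions and flips are isomorphisms over big open subsets of the target, so the splitting extends by reflexivity, and integrability and algebraicity of leaves are unaffected.

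Second, two steps you treat as routine need real arguments.
\begin{itemize}
\item Your minimal-curve argument only sees covering families. Moreover, partial vanishing of $\omega_1$ along curves tangent to $V_1$ does not give $T_{X/Y}\subset V_1$, since curves tangent to a non-integrable distribution can chain-connect a fibre. The paper instead splits $T_{X/Y}$ using $H^0(F,\Omega_F^{[1]})=0$ (Proposition~\ref{propositionungeneric}), then applies relative Picard number one together with Lemma~\ref{lemma:dependent} and Proposition~\ref{prop:fano-type} on the fibre.
\item Descending the splitting to a big open subset of $Y$ is not automatic over prime divisors with non-reduced preimage, where $T_\varphi$ fails to be surjective in codimension one. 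The paper handles this with local cyclic covers (Lemma~\ref{lemma:descendsplittingfinite}, Proposition~\ref{prop:base-decomposition}).
\end{itemize}
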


This result is new even if $X$ is a smooth Fano variety, cp. \cite[Cor.3.7]{CP02}, however
we need the greater generality of varieties of Fano type for the inductive strategy of our proof.
Applying \cite[Thm.C]{DGP24} we deduce:

\begin{corollary}
Let $X$ be a normal projective variety of Fano type such that
$T_X=V_1 \oplus V_2$. Then there exists a finite quasi-\'etale cover $\holom{\nu}{X'}{X}$ such that there is a decomposition
$$
X' \simeq X_1 \times X_2
$$
into a product of normal varieties 
and the decomposition $T_X=V_1 \oplus V_2$ lifts to the canonical decomposition
$p_{X_1}^* T_{X_1} \oplus p_{X_2}^* T_{X_2}$. 
\end{corollary}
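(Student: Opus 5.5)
The Corollary should follow from Theorem \ref{thm:main} together with \cite[Thm.C]{DGP24}. The only gap to close is that the Corollary does not assume $\Q$-factoriality. I will handle this by passing to a small $\Q$-factorial modification and then descending.

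First, since $X$ is of Fano type (Definition \ref{defn:fano-type}), there is a boundary $\Delta$ such that $(X,\Delta)$ is klt and $-(K_X+\Delta)$ is ample (or big and nef). In particular $X$ itself has klt singularities. By \cite{BCHM} there is a small $\Q$-factorial modification $\holom{\mu}{\tilde X}{X}$. Since $\mu$ is small, setting $\tilde\Delta:=\mu^{-1}_*\Delta$ gives $K_{\tilde X}+\tilde \Delta=\mu^*(K_X+\Delta)$. Hence $(\tilde X,\tilde\Delta)$ is klt and $-(K_{\tilde X}+\tilde\Delta)$ is big and nef, so $\tilde X$ is again of Fano type, and it is $\Q$-factorial with klt singularities.

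Second, I transfer the splitting to $\tilde X$. The map $\mu$ is an isomorphism outside closed subsets of codimension at least two on both sides, and $T_{\tilde X}$ is reflexive. So I define $\tilde V_i$ as the reflexive hull of the extension of $V_i$ from the isomorphism locus. A decomposition of reflexive sheaves that holds in codimension one extends uniquely, which gives $T_{\tilde X}=\tilde V_1\oplus\tilde V_2$. Theorem \ref{thm:main} then applies: each $\tilde V_i$ is integrable with algebraic leaves.

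Third, I descend to $X$. Closedness under the Lie bracket and algebraicity of leaves (the general leaf being an algebraic subvariety) can both be checked on a dense open subset. On the common open set where $\mu$ is an isomorphism, $V_i$ and $\tilde V_i$ agree. Since $V_i$ is saturated in $T_X$ (as a direct summand), it follows that $V_i$ is an algebraically integrable foliation on $X$.

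Finally, I apply \cite[Thm.C]{DGP24} to the klt variety $X$ with the decomposition $T_X=V_1\oplus V_2$ into algebraically integrable foliations. This yields a quasi-\'etale cover $X'\to X$ with $X'\simeq X_1\times X_2$, on which the decomposition lifts to the canonical one.

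The main point requiring care is checking that the hypotheses of \cite[Thm.C]{DGP24} are exactly met. If that theorem requires additional input, such as the leaves being proper or the foliations having klt or canonical singularities, I expect to obtain it either from the Fano-type hypothesis (rational connectedness and the klt condition) or from the structure produced in the proof of Theorem \ref{thm:main}.
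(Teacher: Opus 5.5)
Your proposal is correct and is essentially the paper's argument. The paper's proof of Theorem \ref{thm:main} begins with the same steps you use: a small $\Q$-factorial modification, preservation of Fano type via \cite[Lemma 2.8(iii)]{PS09}, and transfer of the splitting by Remark \ref{remark:MMP-induced splitting}. (Your ``big and nef'' needs exactly this standard perturbation to match Definition \ref{defn:fano-type}.) After that, the paper obtains the Corollary by a direct application of \cite[Thm.C]{DGP24} to the klt variety $X$ with its algebraically integrable summands, with no further input, so the hedge in your last paragraph is unnecessary.
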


A Fano manifold is simply connected \cite{Cam92,KMM92}, so every quasi-\'etale cover is trivial. Thus we obtain:

\begin{corollary}
Let $X$ be a Fano manifold such that $T_X=V_1 \oplus V_2$. Then 
$X$ is isomorphic to a product 
$X_1 \times X_2$
and the decomposition $T_X=V_1 \oplus V_2$ corresponds to the canonical decomposition
$p_{X_1}^* T_{X_1} \oplus p_{X_2}^* T_{X_2}$. 
\end{corollary}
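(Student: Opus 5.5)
The plan is to deduce this from the preceding corollary together with simple connectedness. The only extra inputs are purity of the branch locus, to pass from quasi-\'etale to \'etale covers, and the descent of smoothness from a product to its factors.

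First, a Fano manifold $X$ satisfies the hypotheses of the preceding corollary. It is smooth, hence normal, $\Q$-factorial and klt. Since $-K_X$ is ample, taking the boundary $\Delta=0$ shows that $X$ is of Fano type in the sense of Definition \ref{defn:fano-type}. Applying the preceding corollary to $T_X=V_1\oplus V_2$ gives a finite quasi-\'etale cover $\holom{\nu}{X'}{X}$ with $X'\simeq X_1\times X_2$. Under this cover, $\nu^*V_1\oplus\nu^*V_2$ corresponds to $p_{X_1}^*T_{X_1}\oplus p_{X_2}^*T_{X_2}$.

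Next I show that $\nu$ is an isomorphism. Since $X$ is smooth and $X'$ is normal, the Zariski--Nagata purity of the branch locus applies to the finite quasi-\'etale morphism $\nu$. Its branch locus has codimension at least two, so it is empty, and $\nu$ is \'etale. The variety $X'$ is irreducible and hence connected, so $\nu$ is a connected finite \'etale cover of $X$. By \cite{Cam92,KMM92} a Fano manifold is simply connected, so $\nu$ has degree one and is an isomorphism. Therefore $X\simeq X_1\times X_2$. Since $\nu$ is an isomorphism, the identification of the splittings transports directly to $X$: $V_i$ corresponds to $p_{X_i}^*T_{X_i}$.

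It remains to check that the factors $X_i$ are smooth. The projection $X_1\times X_2\to X_1$ is faithfully flat, and regularity descends along flat local homomorphisms. Concretely, if $x_1\in X_1$ and $x_2\in X_2$ is a closed point, then $\sO_{X_1,x_1}\to\sO_{X_1\times X_2,(x_1,x_2)}$ is flat and local with regular target, so the source is regular. Alternatively, $X_1\simeq X_1\times\{x_2\}$ is a fibre of the smooth morphism $p_{X_2}$. Either way $X_1$ is smooth, and by symmetry so is $X_2$.

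I do not expect a real obstacle, since the substantive work is contained in Theorem \ref{thm:main} and the preceding corollary. The one point needing care is the purity step. The corollary only produces a quasi-\'etale cover, and simple connectedness controls \'etale covers, so smoothness of $X$ is essential here. For singular Fano varieties this step fails, which is why the preceding corollary must allow a nontrivial cover.
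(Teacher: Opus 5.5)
Your proof is correct and follows the paper's route: the paper also deduces this from the preceding corollary, noting that a Fano manifold is simply connected, so every quasi-\'etale cover is trivial. You make explicit the purity-of-the-branch-locus step that the paper leaves implicit. One small caveat: your alternative argument for smoothness of $X_1$, via the fibres of $p_{X_2}$, is circular as written, because $p_{X_2}$ is smooth exactly when $X_1$ is; your primary argument, descent of regularity along the flat local map $\sO_{X_1,x_1}\to\sO_{X_1\times X_2,(x_1,x_2)}$, is fine on its own.
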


\subsection{Background and strategy of proof}

A well-known conjecture of Beauville predicts that a splitting of $T_X$, i.e. an infinitesimal decomposition of $X$, should always correspond
to a product structure on the universal cover:

\begin{conjecture} \cite{Bea98}
\label{conjecturebeauville}
Let $X$ be a compact K\"ahler manifold such that $T_X = V_1 \oplus V_2$, where $V_1$ and $V_2$ are {\em integrable} subbundles.
Let \holom{\mu}{\tilde{X}}{X} be the universal covering of $X$. Then there is a decomposition
$$
\tilde{X} \simeq X_1 \times X_2
$$
and the decomposition $T_X=V_1 \oplus V_2$ lifts to the canonical decomposition
$p_{X_1}^* T_{X_1} \oplus p_{X_2}^* T_{X_2}$. 
\end{conjecture}

This conjecture has been studied by many authors, e.g. \cite{Dru00, BPT06, PT13, DPPT22}.
If $X$ is simply connected (e.g. if it is Fano) it predicts that an integrable splitting always corresponds to a product structure on $X$.
For non-uniruled manifolds the existence of a splitting
implies that the direct factors are integrable \cite[Thm.1.3]{a1} while for uniruled manifolds Beauville gave a counter-example \cite[Sect.2.2]{Bea98}. His example is based on the observation  that given an isomorphism $T_X \simeq V_1 \oplus V_2$ the embedding 
$$
V_i \hookrightarrow T_X
$$
is not necessarily unique and for a sufficiently general choice of the embedding the image is not always integrable.
We expect that such counterexamples are never rationally connected:

\begin{conjecture} \label{conj:integrable}
Let $X$ be a projective manifold such that $T_X=V_1 \oplus V_2$.
Assume that $X$ is rationally connected. Then $V_1$ and $V_2$ are integrable.
\end{conjecture}

In fact \cite[Thm.1.4]{a1} shows that the integrability of one direct factor implies
that both distributions are integrable with algebraic leaves. In a similar spirit
a conjecture of Touzet \cite[Conj.1.2]{Dru17b} claims that a regular foliation on a rationally connected manifold always has algebraic leaves.

A natural strategy for Conjecture \ref{conj:integrable} is to run a MMP $X \dashrightarrow X'$ so that we get a Mori fibre space
$$
\holom{\varphi}{X'}{Y}.
$$
We have an induced decomposition $T_{X'} = V_1' \oplus V_2'$ and we show in 
Proposition \ref{prop:base-decomposition} there is an induced decomposition
$$
T_Y = W_1 \oplus W_2.
$$
Since $Y$ is also rationally connected it seems natural to proceed by induction
on the dimension by showing an analogue of Conjecture \ref{conj:integrable} for varieties
with klt singularities. Unfortunately such an analogue can't be true as shown by
Example \ref{example-not-integrable}. This difference can be explained as follows: for a singular rationally connected variety $X$ and a quotient sheaf
$$
T_X \rightarrow Q
$$
the determinant of the dual $\det Q^*$ can be pseudoeffective, this is impossible
in the smooth case. In the case of a splitting $T_X=V_1 \oplus V_2$ the pseudoeffectivity
of $\det V_i^*$ implies the integrability of $V_{3-i}$ \cite[Thm.2]{CH24}, but we learn nothing about $V_i$ itself.

If we assume $X$ to be of Fano type, the situation is much better: by work of Peternell \cite{Pet12} and Ou \cite{Ou23} we know that $\det V_i^*$ can't be pseudoeffective.
Moreover $\Q$-factorial varieties of Fano type are Mori dream spaces so we can run a MMP
with respect to $\det V_i^*$ to construct a suitable Mori fibre space structure on a birational model.
The presence of singularities creates technical difficulties but in the end the naive strategy 
of an induction on the dimension works.

{\bf Acknowledgements.} I would like to thank Cinzia Casagrande who introduced me to varieties of Fano type.
The author was supported  by the France 2030 investment plan managed by the ANR, as part of the Initiative of Excellence of Universit\'e C\^ote d'Azur, reference ANR-15-IDEX-01. 
He was also supported by the ANR-DFG project ``Positivity on K-trivial varieties'', ANR-23-CE40-0026 and DFG Project-ID 530132094,
and the ANR-Project ``Groups in algebraic geometry'' ANR-24-CE40-3526.

\section{Notation and basic facts}

We work over $\C$, for general definitions we refer to \cite{Har77},
We use the terminology of \cite{Deb01, KM98}  for birational geometry and notions from the minimal model program.

Manifolds and varieties will always be supposed to be irreducible. 
We say that a Zariski open subset $X_0 \subset X$  is big
if the complement $X \setminus X_0$ has codimension at least two in $X$.
A fibration
on a normal variety $X$ is a surjective projective morphism $\holom{f}{X}{Y}$
such that $f_* \sO_X \simeq \sO_Y$.

\subsection{Reflexive sheaves and distributions}

Let us recall some basic facts. By definition 
a coherent sheaf $\sF$ on a normal variety $X$ is reflexive if the natural morphism
$\sF \rightarrow \sF^{**}$ is an isomorphism. If two reflexive sheaves $\sF$ and $\sG$ are isomorphic on a big open
subset of $X$, they are isomorphic on $X$ \cite[Prop.1.6]{Har80}.

\begin{definition} \label{defn:saturated}
Let $\sF$ be a reflexive sheaf on a normal variety $X$. We say that a subsheaf $\sG \subset \sF$ 
is saturated if the quotient $\sF/\sG$ is torsion-free.
\end{definition}

\begin{remarks} \label{remark:saturated}
A satured subsheaf $\sG \subset \sF$ of a reflexive sheaf $\sF$ is reflexive (this follows from \cite[Prop.1.1]{Har80}).

A torsion-free sheaf on a normal variety $X$ is locally free in codimension one, i.e. there exists
a big open subset of $X$ where the sheaf is locally free.
Applying this remark to $\sG, \sF$ and $\sG/\sF$ in the definition above we obtain that there exists a big open subset $X_0 \subset X$ over which all three sheaves are locally free and the inclusion $\sG \otimes \sO_{X_0} \rightarrow \sF \otimes \sO_{X_0}$ is an injection of vector bundles.

If $\sG \subset \sF$ is a non-saturated subsheaf of a reflexive sheaf $\sF$ the saturation
$\sG^{sat} \subset \sF$ is the kernel of the surjective morphism
$$
\sF \rightarrow \sF/\sG \rightarrow  (\sF/\sG)/\mbox{Torsion}.
$$
\end{remarks}

Given a normal variety $X$ we denote by
$$
T_X := \Omega_X^*
$$
its tangent sheaf. In particular $T_X$ is reflexive \cite[Cor.1.2]{Har80}.

\begin{definition}
Let $X$ be a normal variety. A distribution on $X$ is a saturated subsheaf $\sF \subset T_X$.

We say that the distribution is integrable if $\sF \otimes \sO_{X_{\nons}} \subset T_{X_{\nons}}$ is closed under the Lie bracket. In this case we also say that $\sF$ is a foliation.

We say that a $\sF$ foliation has algebraic leaves (or is algebraically integrable)
if for a leaf $F$ passing through a general point of $X$, the manifold $F$ is a Zariski open subset of its closure $\overline{F}^{\tiny Zar} \subset X$.
\end{definition}

\begin{definition} \label{defn:relative-tangent}
Let $X$ be a normal variety, and let $\holom{f}{X}{Y}$ be a fibration. We denote by $T_{X/Y} \subset T_X$ the foliation defined
by $f$, i.e. $T_{X/Y}$ is the saturation of the image of the morphism
$\Omega_{X/Y}^* \rightarrow \Omega_X^* \simeq T_X$.
\end{definition}

\begin{lemma} \label{lemma:bertini}
Let $X$ be a normal variety, and let $\holom{f}{X}{Y}$ be a fibration. Let $\sF$ be a reflexive sheaf on $X$. Then there exists a non-empty Zariski open subset $Y_0 \subset Y$ such that
$\sF \otimes \sO_{X_y}$ is reflexive for every $y \in Y$.
\end{lemma}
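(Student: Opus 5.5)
The statement contains a typo: it should say "for every $y \in Y_0$", meaning for every point of some non-empty Zariski open subset, which is how we read it. The plan is to reduce reflexivity to a depth condition (Serre's criterion $S_2$ plus torsion-freeness) and then use generic flatness and openness of such conditions in families.

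\textbf{Step 1: present $\sF$ as a kernel.} Since $\sF$ is reflexive on the normal variety $X$, locally it is the dual of a coherent sheaf. Globally, take a resolution $\sO_X(-m)^{\oplus a} \to \sO_X(-n)^{\oplus b} \to \sF^* \to 0$ and dualize. This gives an exact sequence
$$
0 \to \sF \to \sE_0 \to \sE_1
$$
with $\sE_0, \sE_1$ locally free. (We may take $X$ quasi-projective, or else work on an affine cover of $Y$ and then of $X$.) Set $\sQ$ to be the image of $\sE_0 \to \sE_1$. Then $\sQ \subset \sE_1$ is torsion-free, and $0 \to \sF \to \sE_0 \to \sQ \to 0$ is exact.

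\textbf{Step 2: generic flatness and restriction.} By generic flatness applied to $X$, $\sF$, $\sQ$ and $\sE_1/\sQ$, there is a non-empty open $Y_1 \subset Y$ over which all of these are flat over $Y$. Over $Y_1$ the sequences remain exact after restricting to fibres $X_y$. Hence $\sF_y := \sF \otimes \sO_{X_y}$ is the kernel of $\sE_0|_{X_y} \to \sE_1|_{X_y}$. Shrinking further, generic smoothness gives that $X_y$ is normal for general $y$: $X$ is normal, $f$ has connected fibres, and characteristic is zero, so the general fibre is normal by Bertini-type results (e.g. via the normality locus of fibres, which is open and contains the generic fibre since the generic fibre is normal).

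\textbf{Step 3: reflexivity on the fibre.} On the normal variety $X_y$, a kernel of a map between locally free sheaves whose image is torsion-free is reflexive. Indeed, such a kernel is torsion-free. Moreover, for any big open $U\subset X_y$, sections of $\sF_y$ over $U$ extend: they extend as sections of $\sE_0|_{X_y}$, and their images in the torsion-free sheaf $\sQ_y$ vanish on $U$, hence vanish everywhere. This is exactly the Hartshorne criterion \cite[Prop.1.6]{Har80} for reflexivity. So the key point is that $\sQ_y$ is torsion-free for general $y$.

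\textbf{Main obstacle: torsion-freeness of $\sQ_y$.} This follows from $\sQ_y \subset \sE_1|_{X_y}$, which needs $\mathcal{T}or_1(\sE_1/\sQ, \sO_{X_y})=0$. That vanishing holds over the flat locus of $\sE_1/\sQ$ obtained in Step 2. The delicate point is arranging all flatness statements simultaneously over one open set, which is fine since there are finitely many sheaves involved. The alternative route through the relative $S_2$ property would require more care, so I would use the kernel presentation.
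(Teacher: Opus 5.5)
Your argument is correct, and it takes a different route from the paper's.

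\textbf{The paper's route.} The paper uses generic flatness to make $\sF$ itself flat over $Y$, then appeals to \cite[Thm.12.2.1]{Gro66}: in a proper flat family, the set of fibres on which the restriction of a flat sheaf has a given depth-type property is open. The property holds on the generic fibre, so it holds over a non-empty open set. Turning this into reflexivity also needs normality of the general fibre. Where the lemma is applied, the paper supplies this by Seidenberg's theorem (see the proof of Corollary~\ref{cor:bertini}).

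\textbf{Your route.} You encode reflexivity by a presentation $0\to\sF\to\mathcal{E}_0\to\mathcal{E}_1$ with locally free terms. You make the image $\mathcal{Q}$ and the cokernel $\mathcal{E}_1/\mathcal{Q}$ flat over $Y$. Then $0\to\sF\otimes\sO_{X_y}\to\mathcal{E}_0\otimes\sO_{X_y}\to\mathcal{Q}\otimes\sO_{X_y}\to 0$ stays exact, and $\mathcal{Q}\otimes\sO_{X_y}\to\mathcal{E}_1\otimes\sO_{X_y}$ stays injective. You conclude with Hartshorne's characterisation \cite[Prop.1.1, Prop.1.6]{Har80}: on a normal variety, reflexive sheaves are exactly the saturated subsheaves of locally free sheaves.

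\textbf{What each buys.} Your argument is more elementary and self-contained than the appeal to EGA. In fact flatness of $\sF$ itself is not needed: only $\mathrm{Tor}_1^{\sO_Y}(\mathcal{Q},k(y))=0$ and $\mathrm{Tor}_1^{\sO_Y}(\mathcal{E}_1/\mathcal{Q},k(y))=0$ are used. The price is the global presentation. This is harmless, because you may shrink $Y$ to an affine open, over which $X$ is quasi-projective since $f$ is projective.

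\textbf{One point to tighten.} Generic smoothness only controls $X_y\cap X_{\nons}$. By itself it does not give normality of $X_y$ at points of $X_{\sing}$. Your parenthetical argument is the right one:
\begin{itemize}
\item in characteristic zero the generic fibre is geometrically normal;
\item the geometrically normal locus is open in a flat family;
\item properness of $f$ turns this open set of $X$ into an open set of $Y$.
\end{itemize}
Alternatively, simply cite \cite[Thm.1.7.1]{BS95}. Together with connectedness of the fibres, which follows from $f_*\sO_X\simeq\sO_Y$, this makes $X_y$ integral. Integrality is what your torsion-freeness argument for $\mathcal{Q}\otimes\sO_{X_y}\subset\mathcal{E}_1\otimes\sO_{X_y}$ requires.
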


\begin{proof}
By generic flatness we can assume that $\sF$ is flat over $Y$. Now the statement follows
from \cite[Thm.12.2.1]{Gro66}.
\end{proof}

\begin{corollary} \label{cor:bertini}
Let $X$ be a normal variety, and let $\holom{f}{X}{Y}$ be a fibration. There exists a non-empty Zariski open subset $Y_0 \subset Y$ such that for every $y \in Y$ we have
$$
T_{X/Y} \otimes \sO_{X_y} \simeq T_{X_y}
$$
\end{corollary}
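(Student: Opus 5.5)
The plan is to compare $T_{X/Y}$ with $T_{X_y}$ on a big open subset of a general fibre, and then use reflexivity to extend the isomorphism to all of $X_y$. (The statement should of course read ``for every $y \in Y_0$''.)

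\textbf{Step 1: shrink $Y$.} By generic smoothness we may replace $Y$ by a non-empty open subset $Y_0$ over which:
\begin{itemize}
\item $Y_0$ is smooth;
\item $f$ is flat;
\item the restriction of $f$ to $X_{\nons}$ is smooth.
\end{itemize}
Since $X$ is normal, $X \setminus X_{\nons}$ has codimension at least two. After shrinking $Y_0$ further, for every $y \in Y_0$ the intersection $(X \setminus X_{\nons}) \cap X_y$ has codimension at least two in $X_y$. This uses only a dimension count on the components of the singular locus, comparing those that dominate $Y$ with those that do not. Moreover $X_y$ is normal for general $y$, because the generic fibre of $f$ is normal.

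\textbf{Step 2: identify the sheaves away from the singular locus.} On $X_{\nons} \cap f^{-1}(Y_0)$ the map $f$ is a smooth morphism of smooth varieties. There we have the exact sequence $0 \to f^*\Omega_{Y} \to \Omega_X \to \Omega_{X/Y} \to 0$ of vector bundles, and $\Omega_{X/Y}$ is locally free. Hence $\Omega_{X/Y}^*$ is a subbundle of $T_X$, so it is already saturated and equals $T_{X/Y}$. Restricting to a fibre gives
$$
T_{X/Y} \otimes \sO_{X_y} \simeq (\Omega_{X/Y}|_{X_y})^* \simeq \Omega_{X_y}^* = T_{X_y}
$$
on the open set $X_y \cap X_{\nons}$, which is big in $X_y$ by Step 1.

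\textbf{Step 3: extend to the whole fibre.} Apply Lemma \ref{lemma:bertini} to the reflexive sheaf $\sF = T_{X/Y}$, which is reflexive by Remark \ref{remark:saturated}. After shrinking $Y_0$, the restriction $T_{X/Y} \otimes \sO_{X_y}$ is reflexive on $X_y$. The sheaf $T_{X_y}$ is reflexive because $X_y$ is normal. Two reflexive sheaves that agree on a big open subset are isomorphic \cite[Prop.1.6]{Har80}, which gives $T_{X/Y} \otimes \sO_{X_y} \simeq T_{X_y}$ for every $y \in Y_0$.

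\textbf{Main obstacle.} The only delicate points are the codimension estimate in Step 1 and the normality of the general fibre. For the codimension estimate:
\begin{itemize}
\item components of $X \setminus X_{\nons}$ not dominating $Y$ are removed by shrinking $Y_0$;
\item a component dominating $Y$ has relative dimension at most $\dim X_y - 2$ over a dense open subset of $Y$.
\end{itemize}
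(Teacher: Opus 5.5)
Your proof is correct and is essentially the paper's argument: on a general fibre both sheaves are reflexive (via Lemma \ref{lemma:bertini} and normality of $X_y$), they agree where $f$ is smooth, and reflexivity extends the isomorphism from a big open subset of $X_y$. The only difference is the choice of that big open subset. The paper takes $X_{y,\nons}$, which is big because $X_y$ is normal and which lies in the smooth locus of $f$; you take $X_{\nons}\cap X_y$ and check bigness by a separate dimension count. (Your normality claim for $X_y$ really rests on geometric normality of the generic fibre, which is automatic in characteristic zero, plus a spreading-out result such as the Seidenberg theorem the paper cites.)
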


\begin{proof}
By Seidenberg's theorem \cite[Thm.1.7.1]{BS95}
we can find $Y_0 \subset Y$ such that for $y \in Y_0$ the fibres $X_y$ are normal and have the expected dimension.
In particular $T_{X_y}$ is reflexive. By Lemma \ref{lemma:bertini} we can also assume
that $T_{X/Y} \otimes \sO_{X_y}$ is reflexive. Since $X_y$ is normal it suffices to show that
the sheaves are isomorphic on the smooth locus of $X_y$ \cite[Prop.1.6]{Har80}. Yet $X_{y, \nons}$
is contained in the smooth locus of the morphism $f$, on this set the statement is clear \cite[III,Prop.10.4]{Har77}.
\end{proof}

\begin{lemma} \label{lemma:direct-sum}
Let $X$ be a normal variety and let $\sF \simeq \sF_1 \oplus \sF_2$ be a reflexive sheaf.
\begin{itemize}
\item Then $\sF_1$ and $\sF_2$ are reflexive.
\item Let $x \in X$ be a point such that $\sF$ is locally free in $x$. Then $\sF_1$ and $\sF_2$
are locally free in $x$ and we have a direct sum of vector spaces
$$
\sF_x \simeq \sF_{1,x} \oplus \sF_{2,x}
$$
\end{itemize}
\end{lemma}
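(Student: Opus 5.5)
The plan is to deduce both claims from formal properties of direct sums of modules over local rings, with no geometric input.

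For the first claim, use the characterization of reflexivity by the natural map to the double dual. The functor $\mathcal{G} \mapsto \mathcal{G}^{**}$ commutes with finite direct sums, and the natural morphism $\sF \rightarrow \sF^{**}$ is the direct sum of the natural morphisms $\sF_i \rightarrow \sF_i^{**}$. A direct sum of morphisms is an isomorphism if and only if each summand is. Since $\sF$ is reflexive, each $\sF_i \rightarrow \sF_i^{**}$ is therefore an isomorphism. Coherence of $\sF_i$ is automatic, because $\sF_i$ is both a quotient and a subsheaf of the coherent sheaf $\sF$ via the projection and inclusion.

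For the second claim, work with stalks at $x$. The stalk $\sF_x \simeq \sF_{1,x} \oplus \sF_{2,x}$ is a free module of finite rank over the local ring $\sO_{X,x}$. Each $\sF_{i,x}$ is then a direct summand of a free module, hence finitely generated and projective. A finitely generated projective module over a local ring is free, by Nakayama's lemma. It follows that $\sF_i$ is locally free in a neighbourhood of $x$, since a coherent sheaf with free stalk at $x$ is free near $x$.

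For the decomposition of vector spaces, I read $\sF_x$ as the fibre $\sF \otimes k(x)$ (the stalk statement is the hypothesis itself). Tensoring with $k(x)$ commutes with direct sums, which gives $\sF\otimes k(x) \simeq \sF_1\otimes k(x) \oplus \sF_2 \otimes k(x)$. The ranks add, so the fibres of the summands are complementary subspaces.

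There is no real obstacle here. The only point needing care is the step from projective to free over the local ring, which is where Nakayama's lemma enters.
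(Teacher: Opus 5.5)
Your proof is correct, but for the reflexivity claim it takes a different route from the paper. The paper argues by saturation. The subsheaf $\sF_i$ is torsion-free as a subsheaf of $\sF$. It is the kernel of $\sF \rightarrow \sF/\sF_i \simeq \sF_{3-i}$, whose target is torsion-free. So $\sF_i$ is a saturated subsheaf of a reflexive sheaf, and hence reflexive by Remark~\ref{remark:saturated}, i.e.\ by Hartshorne's kernel characterisation of reflexive sheaves. You instead note that the evaluation map $\sF \rightarrow \sF^{**}$ is the direct sum of the maps $\sF_i \rightarrow \sF_i^{**}$, so each of these is an isomorphism. Your argument is purely formal and needs neither torsion-freeness nor any hypothesis on $X$. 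The paper's argument has the merit of using the saturation formalism it relies on repeatedly afterwards (intersections of saturated subsheaves, Lemma~\ref{lemma:extend-generic-splitting}).

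For local freeness both arguments rest on Nakayama's lemma. The exercise the paper cites, [Har77, II, Ex.~5.8(c)], points to a fibre-dimension argument. The function $y \mapsto \dim \sF_i \otimes k(y)$ is upper semicontinuous and the sum of the two is constant near $x$, so each is constant near $x$; on the reduced scheme $X$ this gives local freeness. Your argument, that a direct summand of a finite free $\sO_{X,x}$-module is projective and hence free, is pure local algebra. The difference is cosmetic.

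Two small points of phrasing. Coherence of $\sF_i$ is cleanest stated as: $\sF_i$ is the image of the idempotent endomorphism of $\sF$ given by the splitting. Complementarity of the fibres follows directly from the inclusions $\sF_i \rightarrow \sF$ being split injective, so the rank count is not needed.
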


\begin{proof}
It is clear that the subsheaves $\sF_i \subset \sF$ are torsion-free. Since $\sF_i$
is the kernel of the quotient $\sF \rightarrow \sF/\sF_{i} \simeq \sF_{3-i}$ we obtain from
Remark \ref{remark:saturated} that $\sF_i$ is reflexive.

The second statement is a consequence of Nakayama's lemma, cf. \cite[II,Ex.5.8.(c)]{Har77}.
\end{proof}

\begin{remark} \label{remark:direct-sum}
Let $X$ be a normal variety, and let $\holom{f}{X}{Y}$ be a fibration.
Let $\sG$ be locally free sheaf on $Y$ such that $f^* \sG \simeq \sF_1 \oplus \sF_2$.
By the projection formula 
we have an induced isomorphism
$$
\sG \simeq f_* f^* \sG \simeq f_* \sF_1 \oplus f_* \sF_2.
$$
\end{remark}

\begin{definition} \label{defn:intersection-sheaves}
Let $X$ be a normal variety. Let $\sF$ be a reflexive sheaf on $X$, and let
$\sG$ and $\sH$ be saturated subsheaves of $\sF$. Then the intersection $\sG \cap \sH$
is the kernel of the morphism
$$
\sG \hookrightarrow \sF \twoheadrightarrow \sF/\sH.
$$
\end{definition}

\begin{remark} \label{remark:intersection-sheaves}
It is straightforward to verify that the definition can be symmetrised, i.e. $\sG \cap \sH$
is also the kernel of the morphism
$$
\sH \hookrightarrow \sF \twoheadrightarrow \sF/\sG.
$$
Moreover $\sG \cap \sH$ is a saturated subsheaf in $\sG$ (resp. $\sH$): 
the quotient $\sG/(\sG \cap \sH)$ injects in the torsion-free sheaf $\sF/\sH$ (resp. $\sF/\sG$),
hence is also torsion-free. In particular $\sG \cap \sH$ is a reflexive sheaf.
\end{remark}

\begin{lemma} \label{lemma:extend-generic-splitting}
Let $X$ be a normal variety, and let $\sF=\sF_1 \oplus \sF_2$ be a reflexive sheaf.
Let $\sG \subset \sF$ be a saturated subsheaf.
If the inclusion 
$$
(\sF_1 \cap \sG)  \oplus 
(\sF_2 \cap \sG)  \hookrightarrow \sG
$$
is an isomorphism in the generic point of $X$, it is an isomorphism.
\end{lemma}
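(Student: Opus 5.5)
The inclusion is injective because $\sF_1 \cap \sF_2 = 0$. So it suffices to show that the cokernel $\sQ := \sG / \big((\sF_1 \cap \sG) \oplus (\sF_2 \cap \sG)\big)$ vanishes. By hypothesis $\sQ$ is a torsion sheaf. The plan is to show directly that $\sQ$ is also torsion-free, hence zero.

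Let $p_i : \sF \to \sF_i$ denote the projections of the direct sum. I first compare $\sF_1 \cap \sG$ with $p_1(\sG)$. Note that $\sF_1 \cap \sG$ is the kernel of $p_2|_{\sG}$: a section $s$ of $\sG$ lies in $\sF_1$ exactly when $p_2(s)=0$. Hence we get an exact sequence
$$
0 \rightarrow \sF_1 \cap \sG \rightarrow \sG \stackrel{p_2}{\rightarrow} p_2(\sG) \rightarrow 0.
$$
Moreover $\sF_2 \cap \sG \subset p_2(\sG)$, since $p_2$ is the identity on $\sF_2$. Therefore
$$
\sQ \simeq p_2(\sG)/(\sF_2 \cap \sG).
$$
Now $p_2(\sG) \subset \sF_2$, so $\sQ$ injects into $\sF_2/(\sF_2 \cap \sG)$.

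By Remark \ref{remark:intersection-sheaves}, $\sF_2 \cap \sG$ is saturated in $\sF_2$. Here $\sF_2$ is reflexive by Lemma \ref{lemma:direct-sum}, and is a saturated subsheaf of $\sF$ since $\sF/\sF_2 \simeq \sF_1$ is torsion-free. So $\sF_2/(\sF_2 \cap \sG)$ injects into $\sF/\sG$, which is torsion-free because $\sG$ is saturated. Consequently $\sQ$ is a subsheaf of a torsion-free sheaf, hence torsion-free. Being generically zero, it vanishes, which proves the lemma.

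The only point requiring care is the identification $\sF_1 \cap \sG = \ker(p_2|_\sG)$ in the sense of Definition \ref{defn:intersection-sheaves}. That definition describes $\sF_1 \cap \sG$ as the kernel of $\sG \to \sF/\sF_1$, and $\sF/\sF_1 \simeq \sF_2$ via $p_2$, so the two descriptions agree and this step is straightforward. I do not expect a genuine obstacle. The saturation of $\sG$ is essential: without it the statement fails, as one sees by taking $\sG$ to be a non-saturated subsheaf of a diagonal line.
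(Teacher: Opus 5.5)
Your proof is correct, and it takes a different route from the paper's.

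\textbf{The paper's route.} It argues geometrically. Since $\sG$ and the $\sF_i \cap \sG$ are reflexive, it suffices to work on a big open subset where all sheaves are locally free and $\sG$, $\sF_i\cap\sG$ are subbundles. There the generic hypothesis makes the ranks add up. Fibrewise injectivity of $(\sF_1\cap\sG)_x\oplus(\sF_2\cap\sG)_x\to\sG_x$ then follows from $\sF_x=\sF_{1,x}\oplus\sF_{2,x}$.

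\textbf{Your route.} You stay at the level of sheaves. The map $s\mapsto p_2(s)$ identifies the cokernel with $p_2(\sG)/(\sF_2\cap\sG)$ and so embeds it into $\sF/\sG$. The one check worth writing out is this: if $s\in\sG$ and $p_2(s)\in\sG$, then $p_1(s)=s-p_2(s)\in\sF_1\cap\sG$.

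\textbf{Comparison.} Your argument is shorter and more general. It never uses reflexivity, normality, or passage to big open subsets, so it holds for any direct sum on an integral scheme as soon as $\sF/\sG$ is torsion-free. Your appeals to the reflexivity of $\sF_2$ and to its saturation in $\sF$ are in fact unnecessary. The injection $\sF_2/(\sF_2\cap\sG)\hookrightarrow\sF/\sG$ is immediate once $\sF_2\cap\sG$ is described as the kernel of $\sF_2\to\sF/\sG$. The paper's version, for its part, matches the fibrewise rank bookkeeping it uses again in Proposition~\ref{propositionungeneric}.

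\textbf{Correction to your closing aside.} A subsheaf of a diagonal line meets both $\sF_i$ trivially. It therefore violates the generic hypothesis and is not a counterexample. A genuine one lives on a curve with local parameter $t$: take $\sF=\sO e_1\oplus\sO e_2$ and let $\sG$ be generated by $e_1+e_2$ and $t\sF$. Then $\sF_i\cap\sG=t\sF_i$, so the direct sum is $t\sF\subsetneq\sG$, although both agree with $\sF$ generically. Here $\sF/\sG$ is torsion, so $\sG$ is not saturated.
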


\begin{proof}
By Remark \ref{remark:intersection-sheaves} the sheaves
$\sF_1 \cap \sG$ and $\sF_2 \cap \sG$ are reflexive, so
it is sufficient to show that the direct sum coincides with $\sG$ on a big open subset.
Thus we can assume without loss of generality that
$$
\sF_1 \cap \sG, \ \sF_2 \cap \sG, \ \sG, \ \sF_1, \  \sF_2, \ \sF
$$
are locally free and $\sG$ is a subbundle of $\sF$. By assumption we have
$$
\rk (\sF_1 \cap \sG) + \rk (\sF_2 \cap \sG)  = \rk \sG,
$$
so it is sufficient to show that for every $x \in X$ the linear map 
$$
(\sF_1 \cap \sG)_x \oplus (\sF_2 \cap \sG)_x  \rightarrow \sG_x,
$$
is injective. Yet 
this is clear since $\sF_{1,x} \oplus \sF_{2,x}  \rightarrow \sF_x$ is injective.
\end{proof}

\subsection{First Chern class and split tangent sheaf}

Recall that on a normal variety the class group $\mbox{Cl}(X)$ is isomorphic
to the group of reflexive sheaves of rank one with group law the reflexive
tensor product $\sF \boxtimes \sG := (\sF \otimes \sG)^{**}$.
Thus if $\sF$ is a reflexive sheaf of rank one such that the corresponding Weil divisor
class is $\Q$-Cartier  we can use the map
$$
\mbox{Pic}(X) \otimes \Q \rightarrow H^1(X, \Omega_X^{[1]})
$$
to define the first Chern class $c_1(\sF)$.
Moreover we have 

\begin{lemma} \label{lemma:smoothlocus}   \cite[Lemma 6.4]{AD14}, \cite{AD14b}
Let $X$ be a normal variety. Then the restriction map
$$
H^1(X, \Omega_X^{[1]}) \rightarrow H^1(X_{\nons}, \Omega_{X_{\nons}})
$$
is injective.
\end{lemma}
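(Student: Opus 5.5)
The plan is to compare cohomology on $X$ and on $X_{\nons}$ through the local cohomology sequence attached to the closed subset $Z := X \setminus X_{\nons}$. The point is that this sequence reduces injectivity to a vanishing that only uses reflexivity of $\Omega_X^{[1]}$ and $\operatorname{codim} Z \geq 2$. Since $X$ is normal, its singular locus $Z$ has codimension at least two, so $U := X_{\nons}$ is a big open subset. Write $j \colon U \hookrightarrow X$ for the inclusion and $\sF := \Omega_X^{[1]}$. Then $\sF$ is reflexive, being the dual of a coherent sheaf \cite[Cor.1.2]{Har80}, and $\sF|_U \simeq \Omega_U$. For any sheaf $\sF$ the long exact sequence of cohomology with supports contains the segment
$$
H^1_Z(X, \sF) \rightarrow H^1(X, \sF) \rightarrow H^1(U, \sF|_U).
$$
So it suffices to prove $H^1_Z(X, \sF) = 0$.

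To get this vanishing I will use the local-to-global spectral sequence
$$
E_2^{p,q} = H^p(X, \mathcal{H}^q_Z(\sF)) \Rightarrow H^{p+q}_Z(X, \sF).
$$
In total degree one it shows that $H^1_Z(X,\sF)$ is filtered by pieces of $H^1(X, \mathcal{H}^0_Z(\sF))$ and $H^0(X, \mathcal{H}^1_Z(\sF))$. Hence it is enough to show that the local cohomology sheaves $\mathcal{H}^0_Z(\sF)$ and $\mathcal{H}^1_Z(\sF)$ vanish.

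These two sheaves appear in the exact sequence of sheaves
$$
0 \rightarrow \mathcal{H}^0_Z(\sF) \rightarrow \sF \rightarrow j_* j^* \sF \rightarrow \mathcal{H}^1_Z(\sF) \rightarrow 0 .
$$
The sheaf $\mathcal{H}^0_Z(\sF)$ consists of sections supported on $Z$. These are torsion sections, so it vanishes because $\sF$ is torsion-free. The vanishing of $\mathcal{H}^1_Z(\sF)$ amounts to the surjectivity, in fact bijectivity, of $\sF \rightarrow j_* j^* \sF$. This is the standard characterisation of reflexive sheaves on a normal variety: a reflexive sheaf is determined by its restriction to a big open subset, i.e. $\sF \simeq j_* (\sF|_U)$ whenever $\operatorname{codim}(X \setminus U) \geq 2$ \cite[Prop.1.6]{Har80}.

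The only step that needs care is the identification $\sF \simeq j_*(\sF|_U)$, that is, the Hartogs-type extension property of reflexive sheaves; everything else is formal homological algebra. Note that the argument uses neither projectivity nor any special property of $\Omega^{[1]}_X$ beyond reflexivity. It therefore shows more generally that $H^1(X, \sF) \rightarrow H^1(U, \sF|_U)$ is injective for every reflexive sheaf $\sF$ and every big open subset $U$ of a normal variety.
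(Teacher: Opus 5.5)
Your proof is correct and complete. Torsion-freeness kills $\mathcal{H}^0_Z(\sF)$. The Hartogs property $\sF \simeq j_*(\sF|_U)$ of reflexive sheaves \cite[Prop.1.6]{Har80} kills $\mathcal{H}^1_Z(\sF)$. Hence $H^1_Z(X,\sF)=0$, and the generalization you state (any reflexive sheaf, any big open subset of a normal variety) is also valid.

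The paper gives no proof of its own and simply quotes \cite[Lemma 6.4]{AD14}, so there is no internal argument to compare with. A slightly shorter packaging of the same idea avoids local cohomology. The five-term exact sequence of the Leray spectral sequence for $j \colon X_{\nons} \hookrightarrow X$ gives an injection
$$
H^1(X, j_*\Omega_{X_{\nons}}) \hookrightarrow H^1(X_{\nons}, \Omega_{X_{\nons}})
$$
for free, and $j_*\Omega_{X_{\nons}} \simeq \Omega^{[1]}_X$ by reflexivity. This uses exactly the same input as your argument.
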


\begin{lemma} \label{lemma:dependent}
Let $X$ be a normal variety such that $T_X=V_1 \oplus V_2$. Suppose that $\det V_1$ and $\det V_2$ are $\Q$-Cartier,
denote by $c_1(V_i)$ the corresponding element in $H^1(X, \Omega_X^{[1]})$. 
Then we have
$$
c_1(V_i) \in H^1(X, V_i^*) \subset H^1(X, \Omega_X^{[1]}).
$$
In particular if $\{ c_1(V_1), c_1(V_2)\}$ are linearly dependent in $H^1(X, \Omega_X^{[1]})$,
at least 
one of them is zero.
\end{lemma}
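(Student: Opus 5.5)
The plan is to compute the class on the smooth locus, where the splitting gives a splitting of cotangent bundles, and then use the injectivity of Lemma \ref{lemma:smoothlocus} to get back to $X$.

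\textbf{Setting up the inclusion.} Dualising $T_X = V_1 \oplus V_2$ gives $\Omega_X^{[1]} \simeq V_1^* \oplus V_2^*$. Hence
$$
H^1(X,\Omega_X^{[1]}) = H^1(X,V_1^*) \oplus H^1(X,V_2^*),
$$
and the inclusion $H^1(X,V_i^*) \subset H^1(X,\Omega_X^{[1]})$ is the inclusion of a direct summand. The same holds over $X_{\nons}$. By Lemma \ref{lemma:direct-sum}, over $X_{\nons}$ the sheaves $V_i$ are vector bundles and $\Omega_{X_{\nons}} = V_1^*|_{X_{\nons}} \oplus V_2^*|_{X_{\nons}}$.

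\textbf{Computing $c_1(V_i)$ on the smooth locus.} Take $m$ with $L := (\det V_i)^{[\otimes m]}$ a line bundle. Then $m\,c_1(V_i)$ is the image of the class $\mathrm{dlog}[L] \in H^1(X,\Omega_X^{[1]})$. By Lemma \ref{lemma:smoothlocus}, it suffices to work on $X_{\nons}$, where $c_1(V_i)$ is the Atiyah class of $\det V_i$ (up to a constant) and $\mathrm{at}(\det V_i) = \mathrm{tr}\,\mathrm{at}(V_i)$.

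The key point is that the Atiyah class of $V_i$ naturally lies in $H^1(X_{\nons}, \Omega \otimes \mathrm{End}(V_i))$. On $X_{\nons}$ the bundle $V_i$ is a direct summand of $T$, so I would compute the Atiyah class of $T$ restricted to the summand, i.e. $V_i \to T \to T \otimes \Omega \to V_i \otimes \Omega$. Here I expect one has to be careful about \emph{which} $\Omega$-factor projects to $V_i^*$.

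A cleaner route avoids Atiyah classes. Consider the line bundle $\det V_i^*|_{X_{\nons}} \subset \Omega^{r_i}$ and write $c_1$ via Čech transition functions $g_{\alpha\beta}$ of $\det V_i$, so that $c_1 = [\,dg_{\alpha\beta}/g_{\alpha\beta}\,]$. Choose local frames of $V_i$ on $U_\alpha$. Then $dg/g$ is the trace of $dA\,A^{-1}$, where $A$ is the transition matrix of $V_i$.

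\textbf{The main obstacle.} I must show that the $V_{3-i}^*$-component of $dg/g$ is a Čech coboundary. Contracting $dg/g$ with vectors in $V_{3-i}$ gives the derivative of $\log\det A$ along $V_{3-i}$. This is the trace of the "Bott-type" partial connection $\nabla_v s = \mathrm{pr}_{V_i}[v,s]$ for $v \in V_{3-i}$, $s \in V_i$. This operator is $\sO$-linear in $v$ and a derivation in $s$, and it is defined globally without any integrability assumption, since projecting the bracket kills the tensorial terms: $[v,fs] = f[v,s] + v(f)s$ and $[fv,s] = f[v,s] - s(f)v$, whose $V_i$-part is $f\,\mathrm{pr}[v,s]$.

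So $V_i$ carries a global partial connection along $V_{3-i}$, and hence so does $\det V_i$. In local trivialisations $e_\alpha$ this gives $\theta_\alpha \in H^0(U_\alpha, V_{3-i}^*)$ with $\theta_\alpha - \theta_\beta = (dg_{\alpha\beta}/g_{\alpha\beta})|_{V_{3-i}}$. Thus the $V_{3-i}^*$-component of $c_1(V_i)$ vanishes, and $c_1(V_i)|_{X_{\nons}} \in H^1(X_{\nons}, V_i^*)$.

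\textbf{Descending to $X$.} Since the decomposition is compatible with restriction, the $V_{3-i}^*$-component of $c_1(V_i)$ on $X$ restricts to zero on $X_{\nons}$. The projection onto a summand commutes with restriction, and $H^1(X,V_{3-i}^*) \to H^1(X_{\nons},V_{3-i}^*)$ is injective because it is a summand of the injective map of Lemma \ref{lemma:smoothlocus}. Hence the component is zero on $X$.

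\textbf{The final claim.} Suppose $a\,c_1(V_1) + b\,c_1(V_2) = 0$ with $(a,b) \neq (0,0)$. The two classes lie in complementary summands, so $a\,c_1(V_1) = 0$ and $b\,c_1(V_2) = 0$. Since one of $a,b$ is nonzero, one of the classes vanishes.
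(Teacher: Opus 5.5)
Your proof is correct and follows the paper's route. You reduce to $X_{\nons}$ via Lemma~\ref{lemma:smoothlocus}, and your check that the summand projections commute with restriction makes this reduction explicit. The final claim then comes from the splitting $H^1(X,\Omega_X^{[1]}) = H^1(X,V_1^*)\oplus H^1(X,V_2^*)$. The only difference is in the smooth case: the paper cites Beauville's Lemma 3.1, whereas you prove it directly with the partial connection $\nabla_v s = \mathrm{pr}_{V_i}[v,s]$ of $V_i$ along $V_{3-i}$. This is the standard argument for that lemma and, as you note, it needs no integrability.
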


\begin{proof}
By Lemma \ref{lemma:smoothlocus} it is sufficient to show the statement in the case where $X$ is smooth. By \cite[Lemma 3.1.]{Bea98} we then know that $c_1(V_i) \in H^1(X, V_i^*)$.
This shows the first statement for the second statement 
just note that the sum $H^1(X, \Omega_X^{[1]}) = H^1(X, V_1^*) \oplus H^1(X, V_2^*)$ is direct.
\end{proof}

\begin{definition} \label{defn:fano-type}
Let $X$ be a normal projective variety. We say that $X$ has Fano type if there exists a boundary
divisor such that the pair $(X, \Delta)$ is klt and $-(K_X+\Delta)$ is ample.
\end{definition}

By the work of Peternell \cite{Pet12}, Ou \cite[Thm.1.10]{Ou23} and Jovinelly-Lehmann-Riedl \cite{JLR25} we know that the tangent sheaf of a variety of Fano type has strong positivity properties. Let us state the result in the form that we will need:

\begin{proposition} \label{prop:fano-type}
Let $(X, \Delta)$ be a variety of Fano type  and let $T_X \rightarrow Q$ be a torsion-free quotient sheaf of positive rank. Then $\det Q \cdot H^{\dim X-1}>0$ for every ample divisor $H$ on $X$.
\end{proposition}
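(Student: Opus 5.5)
We reduce the statement to the generic semipositivity results for tangent sheaves of Fano type varieties, applied on a suitable birational or finite model. Fix an ample divisor $H$ on $X$ and a torsion-free quotient $T_X \rightarrow Q$ of positive rank $q$. Since $X$ is $\Q$-factorial we may take $\det Q$ to be a $\Q$-Cartier class, so the intersection number $\det Q \cdot H^{n-1}$ is defined. (If $\Q$-factoriality is not part of the hypotheses, one first passes to a small $\Q$-factorialisation, which is still of Fano type and does not change intersection numbers with pulled-back $H$.) Dually, $Q^* \hookrightarrow \Omega_X^{[1]}$, so the claim is equivalent to the following statement: for every nonzero saturated subsheaf $\sG = Q^*$ of the reflexive cotangent sheaf, $\det \sG \cdot H^{n-1} < 0$.

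The first step is the Fano type input. Take a general complete intersection curve $C$ cut out by $m H$, with $m \gg 0$; it lies in the smooth locus of $(X,\Delta)$ away from codimension two. By Mehta--Ramanathan, the Harder--Narasimhan filtration of $T_X$ with respect to $H$ restricts to that of $T_X|_C$. So it suffices to show that every quotient of $T_X|_C$ has positive degree, i.e. that $T_X|_C$ is ample. This is the content of \cite{Pet12} and \cite[Thm.1.10]{Ou23}, refined in \cite{JLR25}. Ou's theorem states that for a klt pair $(X,\Delta)$ with $-(K_X+\Delta)$ nef and nonzero on movable curves, any quotient $T_X \to Q$ has $\det Q$ positive on movable classes. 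When $-(K_X+\Delta)$ is ample this applies to the movable class $H^{n-1}$. The argument behind it is the Campana--P\u{a}un criterion: if $\det Q \cdot H^{n-1} \le 0$, then the destabilizing part of $\Omega_X^{[1]}$ gives a foliation whose dual determinant is pseudoeffective on $H^{n-1}$. By the algebraicity criterion this foliation would be algebraically integrable with rationally connected leaves, and adjunction for foliations would then contradict the ampleness of $-(K_X+\Delta)$.

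The second step handles strictness. Ou's statement a priori gives only $\det Q\cdot H^{n-1} \ge 0$ in borderline cases. Suppose equality holds. The slope minimum of $T_X$ is then $0$, so the maximal destabilizing subsheaf $\sG$ of $\Omega^{[1]}_X$ has slope $\ge 0$. If the slope is strictly positive, the above contradiction already applies. If the slope is exactly zero, the dual foliation has $H$-semistable tangent sheaf of degree zero. We then use \cite{CP19}/Campana--P\u{a}un: a foliation with $-K_\sF$ having $\mu_{\min}>0$ is algebraic, but here we only have slope zero, so instead we invoke the fact that $X$ is rationally connected, since Fano type varieties are rationally connected by Zhang and HM. A rationally connected variety cannot carry a saturated subsheaf of $\Omega^{[1]}$ of degree $\ge 0$ on $H^{n-1}$ in the klt setting, by Graf--Kebekus extension of forms applied to the resolution combined with generic ampleness \cite{JLR25}.

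The main obstacle is precisely this equality case. For klt singular rationally connected varieties the analogous statement fails (as the introduction notes, $\det Q^*$ may be pseudoeffective). Hence the argument must genuinely use the boundary $\Delta$ and the ampleness of $-(K_X+\Delta)$, via \cite[Thm.1.10]{Ou23} or \cite{JLR25}, which give strict positivity for Fano type pairs. In practice I would cite that theorem directly, after checking that its hypotheses (klt, $-(K_X+\Delta)$ ample, $H^{n-1}$ movable) are met.
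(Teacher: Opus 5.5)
There is a genuine gap. Your first paragraph (small $\Q$-factorial modification, invariance of $\det Q\cdot H^{\dim X-1}$) matches the paper. However, the proposal never establishes the \emph{strict} inequality, which is the only content beyond citations. Step~2 rules out $\det Q\cdot H^{\dim X-1}=0$ by asserting that a rationally connected klt variety carries no saturated subsheaf of $\Omega^{[1]}_X$ of non-negative degree. This is false, and the paper contains a counterexample. The surface $Y=(E\times E)/\langle i_S\rangle$ of Example~\ref{example-not-integrable} is klt and rationally connected, yet $\Omega_Y^{[1]}\simeq M^*\oplus M^*$ with $M^{[3]}\simeq\sO_Y$. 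So $M^*\subset\Omega_Y^{[1]}$ is a saturated subsheaf of degree zero. Your last paragraph concedes exactly this, so Step~2 has to be discarded. Your sketches of how the Fano type hypothesis enters do not repair it. The Campana--P\u{a}un algebraicity criterion applies to a subsheaf of $T_X$ with positive minimal slope (in your situation, the kernel of $T_X\to\sG^*$ for the maximal destabilizing $\sG\subset\Omega^{[1]}_X$). It does not apply to ``the destabilizing part of $\Omega^{[1]}_X$'' or to a foliation of degree zero. And ``adjunction for foliations would contradict ampleness'' is not an argument.

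What remains is a citation, and you describe it inconsistently. In Step~1 Ou's theorem gives positivity of $\det Q$ on movable classes; in Step~2 it gives only $\geq 0$. Read strictly, the first version is false even for Fano manifolds: on $\PP^1\times\PP^1$ take $Q=p_1^*T_{\PP^1}$ and the movable class of a $p_1$-fibre. Read as non-negativity, it does not give the proposition. The input the paper actually uses is \cite[Prop.2.13]{JLR25}. On the $\Q$-factorial model $\tau\colon X'\to X$, which is of Fano type by \cite[Lemma 2.8(iii)]{PS09}, the determinant $\det Q'$ is pseudoeffective \emph{and not numerically trivial}. The step you are missing is the passage from this to $\det Q'\cdot(\tau^*H)^{\dim X-1}>0$. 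Since $\tau^*H$ is only nef and big, this is not automatic; the paper obtains it from the Hodge index theorem, using that $\tau$ is small. Then $\det Q\cdot H^{\dim X-1}=\det Q'\cdot(\tau^*H)^{\dim X-1}>0$. Your Mehta--Ramanathan reduction is a correct reformulation of $\mu_{\min}(T_X)>0$ on $X$. It contributes nothing toward strictness, though, and it does not apply on $X'$ as stated because $\tau^*H$ is not ample.
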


\begin{remark*}
By definition the determinant $\det Q = (\bigwedge^{\rk Q} Q)^{**}$ is a reflexive sheaf, so it 
corresponds to a Weil divisor class.  Even if $\det Q$ is not $\Q$-Cartier the
intersection number $\det Q \cdot H^{\dim X-1}$ is well-defined by taking the intersection
of $H^{\dim X-1}$ with a codimension $1$-cycle representing $\det Q$.
\end{remark*}

\begin{proof}
Since $(X, \Delta)$ is klt there exists a small $\Q$-factorial modification $\holom{\tau}{X'}{X}$  \cite[Cor.1.37]{Kol13}. The quotient $T_X \rightarrow Q$
induces a quotient $T_{X'} \rightarrow Q'$ such that $Q'$ is isomorphic to the coherent sheaf $\tau^* Q$ on a big open subset. In particular we have 
$$
\det Q' \cdot (\tau^* H)^{\dim X-1} =
\det Q \cdot H^{\dim X-1}.
$$
By \cite[Lemma 2.8(iii)]{PS09} the variety $X'$ is of Fano type. Thus we know by 
\cite[Prop.2.13]{JLR25} that $\det Q'$ is pseudoeffective and not numerically trivial. Since $\tau^* H$ is nef and big and $\tau$ is small the Hodge index theorem  implies that $\det Q' \cdot (\tau^* H)^{\dim X-1}>0$.
\end{proof}

\begin{corollary} \label{cor:fano-type}
Let $(X, \Delta)$ be a Fano type variety 
such that $T_X = V_1 \oplus V_2$ and $\rk V_i>0$ for $i=1,2$. If $\det V_1$ and $\det V_2$ are $\Q$-Cartier
we have $\rho(X) \geq 2$.
\end{corollary}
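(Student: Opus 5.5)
We argue by contradiction and assume $\rho(X)=1$. Then $N^1(X)_\Q$ is one-dimensional, and we want to produce a linear dependence between $c_1(V_1)$ and $c_1(V_2)$ in $H^1(X,\Omega_X^{[1]})$, so that Lemma \ref{lemma:dependent} forces one of them to vanish. Proposition \ref{prop:fano-type} will then rule this out.

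First, the positivity input. For $i=1,2$ the projection $T_X \rightarrow V_{3-i}$ is a torsion-free quotient of positive rank, since $V_{3-i}$ is reflexive by Lemma \ref{lemma:direct-sum}. Proposition \ref{prop:fano-type} then gives
$$
\det V_{3-i}\cdot H^{\dim X-1}>0
$$
for an ample $H$. In particular, neither $\det V_1$ nor $\det V_2$ is numerically trivial, and so no positive multiple of either is trivial in $\pic(X)\otimes\Q$. Choose $m$ such that $m\det V_1$ and $m\det V_2$ are Cartier.

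Next, the dependence. A variety of Fano type is rationally connected with klt (hence rational) singularities, so $H^1(X,\sO_X)=0$, and the same holds for $H^2(X,\sO_X)$. Hence $\pic(X)$ is finitely generated and numerical equivalence agrees with linear equivalence up to torsion, giving $\pic(X)\otimes\Q\simeq N^1(X)_\Q\simeq\Q$. So there are integers $(a,b)\neq(0,0)$ with $a\,m\det V_1 + b\,m\det V_2$ torsion in $\pic(X)$. Since $\pic(X)\otimes\Q\rightarrow H^1(X,\Omega_X^{[1]})$ is linear and kills torsion, $a\,c_1(V_1)+b\,c_1(V_2)=0$. By Lemma \ref{lemma:dependent} some $c_1(V_i)$ is zero.

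Finally, we derive the contradiction. This step needs $c_1(V_i)=0$ to imply that $\det V_i$ is numerically trivial, which is the point requiring care. I would take $C$ to be a general complete intersection curve of very ample divisors. Such a curve lies in $X_{\nons}$, and the degree of a line bundle $L$ on $X$ along $C$ can be read off from its image in $H^1(X_{\nons},\Omega^1)$ by pairing with the fundamental class of $C$. Using Lemma \ref{lemma:smoothlocus}, vanishing of $c_1(V_i)$ then gives $\det V_i\cdot H^{\dim X-1}=0$, contradicting the first step. Alternatively, one can argue more simply that the map $\pic(X)\otimes\Q \rightarrow H^1(X,\Omega^{[1]}_X)$ is injective on the $\Q$-line spanned by an ample class, since $c_1(H)$ of an ample Cartier $H$ is nonzero by the same curve pairing. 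With $\rho=1$ every $\Q$-Cartier class is a multiple of $H$, and that multiple is nonzero by the first step. The main obstacle is justifying this degree-pairing compatibility on the singular variety; the rest is formal.
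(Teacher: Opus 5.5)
Your proof is correct and follows the paper's argument. Assuming $\rho(X)=1$, Proposition \ref{prop:fano-type} makes $\det V_1$ and $\det V_2$ positive multiples of an ample class, so their first Chern classes are proportional, and Lemma \ref{lemma:dependent} then forces one of them to vanish, which is absurd for a class that is not numerically trivial. You also make explicit two points that the paper's two-line proof leaves implicit: $H^1(X,\sO_X)=0$ turns numerical proportionality into proportionality in $\pic(X)\otimes\Q$, and $c_1(L)=0$ forces $L\cdot C=0$ for a complete intersection curve $C\subset X_{\nons}$. For the second point only functoriality of restriction is needed, not the injectivity in Lemma \ref{lemma:smoothlocus}.
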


\begin{proof}
If $\rho(X)=1$ we know by Proposition \ref{prop:fano-type} that  $\det V_1$ and $\det V_2$
are ample. For the same reason they are linearly dependent, a contradiction to Lemma \ref{lemma:dependent}.
\end{proof}

\section{Fibrations on varieties with split tangent sheaf}
\label{section:fibrations}

In this section we study the behaviour of the splitting of the tangent sheaf for varieties
admitting a fibration. Combined with general tools from MMP this will allow to show the main result in Section \ref{section:proofs}.

\subsection{Relative tangent sheaf}

The following statement is a straightforward adaptation of \cite[Prop.3.16]{a2}, we give the details of the proof for the convenience of the reader.

\begin{proposition} \label{propositionungeneric}
Let $X$ be a normal variety such that  $T_X=V_1 \oplus V_2$.
Let $\holom{f}{X}{Y}$ be a fibration such that the general fibre $F$ satisfies $H^0(F, \Omega_F^{[1]})=0$. 
Then one has
$$
T_{X/Y} = (T_{X/Y} \cap V_1) \oplus (T_{X/Y} \cap V_2).
$$
\end{proposition}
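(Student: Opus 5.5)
By Lemma \ref{lemma:extend-generic-splitting}, applied with $\sF = T_X$ and $\sG = T_{X/Y}$ (which is saturated by Definition \ref{defn:relative-tangent}), it suffices to show that the inclusion
$$
(T_{X/Y} \cap V_1) \oplus (T_{X/Y} \cap V_2) \hookrightarrow T_{X/Y}
$$
is an isomorphism at the generic point of $X$. We may therefore restrict to a non-empty open subset of $Y$. We shrink $Y$ to $Y_0$ so that $Y_0$ is smooth, $f$ is flat, and the general fibre $F$ is normal. By Corollary \ref{cor:bertini} we also arrange $T_{X/Y}\otimes \sO_F \simeq T_F$. We further shrink so that $T_Y$ is locally free and, over the locus where $f$ is smooth, there is an exact sequence $0 \to T_{X/Y} \to T_X \to f^*T_Y \to 0$.

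The key step is to look at the compositions
$$
\alpha_i \colon T_{X/Y} \hookrightarrow T_X = V_1 \oplus V_2 \twoheadrightarrow V_i \hookrightarrow T_X \twoheadrightarrow f^* T_Y
$$
and show that they vanish generically. Granting this, a local section $v = v_1 + v_2$ of $T_{X/Y}$ decomposes with each $v_i$ mapping to zero in $f^*T_Y$ generically. Hence generically $v_i \in T_{X/Y}\cap V_i$, which is what we want. Now the sum of the $\alpha_i$ is the zero map, since $T_{X/Y}$ maps to zero in $f^*T_Y$, so $\alpha_2 = -\alpha_1$. It therefore suffices to show $\alpha_1 = 0$. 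Restricting to a general fibre $F$ gives
$$
\alpha_1|_F \colon T_F \to f^*T_Y|_F \simeq \sO_F^{\oplus m}, \qquad m = \dim Y,
$$
defined on the smooth locus of $F$ (which is big in $F$) and hence extending by reflexivity. Each coordinate of $\alpha_1|_F$ is then a morphism $T_F \to \sO_F$, that is, an element of $H^0(F, \Omega_F^{[1]}) = 0$, since $\Omega_F^{[1]} = T_F^*$ when $F$ is normal. Thus $\alpha_1|_F = 0$ for general $F$, and so $\alpha_1 = 0$ on a dense open set. Being a map between torsion-free sheaves, it is then identically zero.

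The main obstacle is the bookkeeping of restrictions to fibres in the singular setting. One must check that $T_{X/Y}|_F \simeq T_F$ (Corollary \ref{cor:bertini}) and that $f^*T_Y|_F$ is trivial, which requires $T_Y$ to be locally free near $f(F)$; this holds after shrinking $Y$ to its smooth locus. One must also check that the morphism $T_F \to \sO_F^{m}$ obtained by restriction genuinely corresponds to a reflexive $1$-form on $F$. All of these hold after shrinking $Y$, because the vanishing only needs to be verified at the generic point.
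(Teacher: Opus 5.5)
Your proof is correct and uses the same key input as the paper. That input is the vanishing of $\mathrm{Hom}(T_{X/Y}, f^*T_Y)$, deduced from $H^0(F,\Omega_F^{[1]})=0$ and applied to the composites $T_{X/Y}\to V_i\to f^*T_Y$, which are the paper's $\beta\circ\alpha$ and $\delta\circ\gamma$. Where the paper closes with a chain of rank inequalities, you observe directly that $q_i(v)\in\ker q = T_{X/Y}$ at the generic point; this is a cleaner route to the same generic splitting before applying Lemma~\ref{lemma:extend-generic-splitting}.
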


\begin{proof}
We choose a non-empty $Y_0 \subset Y$ such that $Y_0$ is smooth and all the $f$-fibres are normal \cite[Thm.1.7.1]{BS95}. We will show that the statement holds on $X_0 := \fibre{f}{Y_0}$, by Lemma \ref{lemma:extend-generic-splitting} this implies the statement on $X$.

In order to simplify the notation we will write $X$ and $Y$ rather that $X_0$ and $Y_0$.
Let us first prove that
\begin{equation} \label{nomorphism}
\mbox{Hom}(T_{X/Y}, f^* T_{Y}) = H^0(X, \Homsheaf(T_{X/Y}, f^* T_{Y})) =0.
\end{equation}
Since $\Homsheaf(T_{X/Y}, f^* T_{Y})$ is torsion-free it is sufficient to show that
for a general fibre $F$ 
$$
H^0(F, \Homsheaf(T_{X/Y}, f^* T_{Y}) \otimes \sO_F) = 0. 
$$
Since $f^* T_{Y}$ is trivial on $F$ and 
$T_{X/Y} \otimes \sO_F \simeq T_F$ by Corollary \ref{cor:bertini}
this follows from
$$
H^0(F, \Homsheaf(T_F, \sO_F)) \simeq H^0(F, \Omega_F^{[1]})=0.
$$

Let 
$$
\holom{i_1}{V_1}{T_X}, \qquad  \holom{i_2}{V_2}{T_X}
$$
be the inclusion maps, and
let 
$$
\holom{q_1}{T_X}{V_1}, \qquad \holom{q_2}{T_X}{V_2}
$$
be the projections given by the decomposition $T_X = V_1 \oplus V_2$.  Furthermore we have
the natural inclusion \holom{i}{T_{X/Y}}{T_X} and a generically surjective map 
\holom{q}{T_X}{f^* T_Y}.
Taking the composition of these maps we obtain the following (typically not
commutative) diagram:
\[
\xymatrix{
& & V_1 \ar @/^1pc/[d]^{i_1} \ar @/^1pc/[rd]^\beta & &
\\
0 \ar[r] & T_{X/Y}  \ar[r]^{i} \ar @/^1pc/[ru]^\alpha \ar @/_1pc/[rd]_\gamma & T_X \ar[r]^q
\ar @/^1pc/[u]^{q_1} \ar @/_1pc/[d]_{q_2} & f^* T_Y & 
\\
& & V_2 \ar @/_1pc/[u]_{i_2} \ar @/_1pc/[ru]_\delta & &
}
\]
Let $x \in X$ be a point where $f$ is smooth, then $X$ is smooth in $x$,
the reflexive sheaves $V_1, V_2$ are locally free in $x$ (cf. Lemma \ref{lemma:direct-sum}) and
$T_{X/Y} \subset T_{X}$ is a subbundle near $x$.
Moreover we have
$$
\ker \alpha_x = T_{X/Y,x} \cap V_{2,x}, \qquad \ker \gamma_x= T_{X/Y,x} \cap V_{1,x}.
$$
Since $(T_{X/Y,x} \cap V_{2,x}) \cap (T_{X/Y,x} \cap V_{1,x})=0$ this implies
\begin{equation}
\label{eqn:star}
\rk \alpha_x + \rk \gamma_x = \dim T_{X/Y,x} -
\dim \ker \alpha_x + \dim T_{X/Y,x} - \dim \ker \beta_x
\geq \rk T_{X/Y,x}.
\end{equation}
In particular $\rk \alpha + \rk \gamma \geq \rk T_{X/Y}$ and we claim that equality holds.
Granting this for the time being, we show how to conclude:
since $\ker \alpha = T_{X/Y} \cap V_{2}$ and $\ker \gamma= T_{X/Y} \cap V_{1}$,
we see that $\rk (T_{X/Y} \cap V_{1}) + \rk (T_{X/Y} \cap V_{2}) = \rk T_{X/Y}$
and clearly $(T_{X/Y} \cap V_{1}) \cap (T_{X/Y} \cap V_{2})=0$. Thus the map
$$
(T_{X/Y} \cap V_{1}) \oplus (T_{X/Y} \cap V_{2}) \rightarrow  T_{X/Y}
$$
is an isomorphism in the generic point of $X$, hence everywhere by Lemma \ref{lemma:extend-generic-splitting}.

{\it Proof of the claim.}
By \eqref{nomorphism} the morphisms 
$$
\holom{\beta \circ \alpha}{T_{X/Y}}{f^* T_Y}, \qquad
\holom{\delta \circ \gamma}{T_{X/Y}}{f^* T_Y}
$$
are zero, in particular
$$
\rk \alpha \leq \rk (\ker \beta) = \rk V_1 - \rk \beta, \qquad
\rk \gamma \leq \rk (\ker \delta) = \rk V_2 - \rk \delta.
$$
The morphism $\beta$ is the restriction of $q$ to $V_1$ and $\delta$ is the restriction of $q$ to $V_2$.
Since $q$ is generically surjective this implies that 
\[
\rk \beta + \rk \delta \geq \rk f^* T_Y = \rk T_X - \rk T_{X/Y}.
\]
Putting these inequalities together  with \eqref{eqn:star} we obtain
\begin{eqnarray*}
\rk T_{X/Y} & \leq & \rk \alpha + \rk \gamma
\\
& \leq & \rk V_1 + \rk V_2 - \rk \beta - \rk \delta
\\
& \leq & \rk T_X - (\rk T_X- \rk T_{X/Y_0}) = \rk T_{X/Y}.
\end{eqnarray*}
Thus all the inequalities are equalities.
\end{proof}

\begin{proposition} \label{prop:inclusion-rel-tangent}
Let $X$ be a normal $\Q$-factorial projective variety with klt singularities
such that $T_X=V_1 \oplus V_2$.
Let $\holom{f}{X}{Y}$ be an elementary $K_X$-negative contraction of fibre type
such that $\det V_i$ is $f$-ample. Then we have 
$$
T_{X/Y} \subset V_i.
$$
\end{proposition}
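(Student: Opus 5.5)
Since $f$ is an elementary $K_X$-negative contraction of fibre type, the general fibre $F$ is a normal projective variety with klt singularities and $-K_F$ ample, so $F$ is of Fano type and rationally connected. By Hodge theory for klt varieties we get $H^0(F, \Omega_F^{[1]})=0$. Proposition \ref{propositionungeneric} then gives
$$
T_{X/Y} = (T_{X/Y} \cap V_1) \oplus (T_{X/Y} \cap V_2).
$$
It therefore suffices to prove that $W := T_{X/Y} \cap V_{3-i}$ is zero. Suppose instead that $W \neq 0$.

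The idea is to restrict to a general fibre and to general curves contracted by $f$. By Corollary \ref{cor:bertini} we have $T_{X/Y}\otimes \sO_F \simeq T_F$. Moreover the splitting of $T_X$ restricts on $F$, over a big open subset of $F$ contained in the locus where everything is locally free (Lemma \ref{lemma:direct-sum}), to
$$
T_X|_F = V_1|_F \oplus V_2|_F, \qquad T_F = W_1 \oplus W_2 ,
$$
where $W_j := (T_{X/Y}\cap V_j)|_F$. Since $T_X|_F$ is an extension of the trivial bundle $f^*T_Y|_F$ by $T_F$ (generically), we get the following:
\begin{itemize}
\item $W_j \subset V_j|_F$;
\item the quotient $V_j|_F / W_j$ injects into the trivial sheaf $f^*T_Y|_F$.
\end{itemize}

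The key step is a degree computation on a general complete intersection curve $C \subset F$ cut out by an ample divisor. We take $C$ inside the locus where all sheaves are locally free. First, $(V_{3-i}/W_{3-i})|_C$ is a subsheaf of a trivial bundle, so its degree is $\leq 0$. Second, $\det W_{3-i}$ has positive degree on $C$ if $W_{3-i} \neq 0$. This follows from Proposition \ref{prop:fano-type} applied to $F$, which is of Fano type: $W_{3-i}$ is a torsion-free quotient of $T_F$ of positive rank via the splitting. Combining the two, we only obtain that $\deg \det V_{3-i}|_C$ exceeds $\deg \det(V_{3-i}/W_{3-i})|_C$, so this alone gives no contradiction.

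The real input must come from the relative Picard number one. Since $\rho(X/Y)=1$ and $X$ is $\Q$-factorial, every Weil divisor on $X$ is, relative to $Y$, numerically a rational multiple of $K_X$. In particular $\det V_{3-i} \equiv_f \lambda \det V_i$ for some $\lambda \in \Q$. I would then compare the classes $c_1(V_1)|_F$ and $c_1(V_2)|_F$ in $H^1(F, \Omega_F^{[1]})$. Both lie in the image of $N^1(X/Y)$, which has dimension $1$, so they are linearly dependent. Restricted to $F$, we have $c_1(V_j)|_F = c_1(W_j) + c_1(V_j|_F/W_j)$.

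For the second summand, the quotient $V_j|_F/W_j$ sits inside a trivial bundle and hence has nonpositive degree. On the other hand, $V_j|_F/W_j$ is a quotient of $V_j|_F$, and $V_j|_F$ is a direct summand of $T_X|_F$, which is an extension of the trivial bundle $f^*T_Y|_F$ by $T_F$. Using the Fano-type positivity of quotients of $T_F$ against this extension, I expect $\det(V_j|_F/W_j)$ to be numerically trivial on $F$. Hence $c_1(V_j)|_F = c_1(W_j)$ numerically.

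Now Lemma \ref{lemma:dependent} applied on $F$ to $T_F = W_1\oplus W_2$ says that the classes $c_1(W_1), c_1(W_2)$ lie in complementary subspaces $H^1(F, W_1^*) \oplus H^1(F, W_2^*)$. Since they are linearly dependent, one of them is zero. Proposition \ref{prop:fano-type} forces $c_1(W_j)\neq 0$ whenever $W_j \neq 0$, so one $W_j$ vanishes. Because $\det V_i$ is $f$-ample, $c_1(V_i)|_F = c_1(W_i) \neq 0$, so $W_i \neq 0$. Hence the vanishing one is $W_{3-i}$, a contradiction. This is essentially the argument of Corollary \ref{cor:fano-type} on the fibre.

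The main obstacle is the step showing $\det(V_j|_F/W_j)\equiv 0$. Equivalently, one must show that the extension of $f^*T_Y|_F$ by $T_F$ does not produce extra degree in the restricted summands. It is also delicate to make sense of restrictions of $\Q$-Cartier classes to the possibly singular fibre $F$. I would handle the first point via the nef-ness of $\det$ of quotients of trivial bundles combined with the Fano-type positivity, and the second via Lemma \ref{lemma:smoothlocus}.
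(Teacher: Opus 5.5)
Your proof has a genuine gap, and it sits exactly at the step you flag. The claim that $\det(V_j|_F/W_j)$ is numerically trivial on $F$ is only ``expected'', yet your route needs it twice. You use it first to pass from the linear dependence of $c_1(V_1)|_F, c_1(V_2)|_F$ to that of $c_1(W_1), c_1(W_2)$, and again to get $W_i\neq 0$ from the $f$-ampleness of $\det V_i$. As you set things up, $V_j|_F/W_j$ is known only to be a subsheaf of a trivial bundle, which gives degree $\leq 0$, and a quotient of $V_j|_F$. It is not a quotient of $T_F$, so Fano-type positivity supplies no lower bound.

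More to the point, the detour is unnecessary, and this is the idea you are missing. $W_j$ is the restriction to $F$ of $\sF_j := T_{X/Y}\cap V_j$, which is a reflexive sheaf on $X$ itself (Remark \ref{remark:intersection-sheaves}). Since $X$ is $\Q$-factorial, $\det \sF_j$ is $\Q$-Cartier, and $\det(\sF_j\otimes\sO_F)\simeq(\det\sF_j)\otimes\sO_F$. So $c_1(W_j)$ already lies in the image of $\pic(X)\otimes\Q\to\pic(F)\otimes\Q$, which has rank one because $f$ is elementary. You applied this observation to $c_1(V_j)|_F$ but not to $c_1(W_j)$. With it, $c_1(W_1), c_1(W_2)$ are linearly dependent directly. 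Lemma \ref{lemma:dependent} and Proposition \ref{prop:fano-type} on $F$ then give $W_1=0$ or $W_2=0$, i.e. $T_{X/Y}\subset V_1$ or $T_{X/Y}\subset V_2$. This is the paper's argument.

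The paper also decides which inclusion holds without your claim. Suppose $T_{X/Y}\subset V_{3-i}$. Then $V_i^*\otimes\sO_F$ lies in the kernel of $\Omega_X^{[1]}\otimes\sO_F\to\Omega_F^{[1]}$. Since $c_1(V_i)\in H^1(X,V_i^*)$ by Lemma \ref{lemma:dependent}, the restriction $c_1(V_i)|_F\in H^1(F,\Omega_F^{[1]})$ vanishes, contradicting the $f$-ampleness of $\det V_i$.

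If you prefer to keep your route, the flagged step can be closed by noting that the quotient is a direct summand of a trivial bundle, not merely a subsheaf. Since $T_{X/Y}=\sF_1\oplus\sF_2$ with $\sF_j\subset V_j$, one has $T_X/T_{X/Y}\simeq V_1/\sF_1\oplus V_2/\sF_2$. On a big open subset of a general fibre this is the trivial bundle $f^*T_Y|_F$. Hence $\det(V_j/\sF_j)|_F$ has degree $\leq 0$ (subsheaf) and $\geq 0$ (quotient) on a general complete intersection curve in $F$. Being the restriction of a $\Q$-Cartier class on $X$, it is a multiple of an ample class on $F$, hence numerically trivial. But that last step again uses $\Q$-factoriality of $X$ and rank one, which already give the dependence of $c_1(W_1), c_1(W_2)$ without any comparison.
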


\begin{proof}
In order to simplify the notation assume that $\det V_1$ is $f$-ample.
Since $X$ is klt and the contraction $f$ is $K_X$-negative
a general $f$-fibre $F$ is a Fano variety with klt singularities. Therefore 
$F$ is rationally connected by \cite[Cor.1.5]{HM07} and
$H^0(F, \Omega_F^{[1]})=0$ by \cite[Thm.5.1]{GKKP11}.
By Proposition \ref{propositionungeneric} this implies
\begin{equation}
\label{split-relative}
T_{X/Y} =  (T_{X/Y} \cap V_1) \oplus (T_{X/Y} \cap V_2).
\end{equation}
Set $\sF_i := T_{X/Y} \cap V_i$, then $\sF_i$ is a reflexive sheaf by Lemma \ref{lemma:direct-sum}
and $\det \sF_i$ is $\Q$-Cartier since $X$ is $\Q$-factorial.
By Corollary \ref{cor:bertini}
we obtain a decomposition
$$
T_F = T_{X/Y} \otimes \sO_F \simeq (\sF_1 \otimes \sO_F) \oplus (\sF_2 \otimes \sO_F)
$$
and $\det(\sF_i \otimes \sO_F) \simeq (\det \sF_i) \otimes \sO_F$ is $\Q$-Cartier.
Since $f$ is an elementary Mori contraction we know that the restriction map
$$
\pic(X) \otimes \Q \rightarrow \pic(F) \otimes \Q
$$
has rank one. Therefore $c_1(\sF_1 \otimes \sO_F)$ and $c_1(\sF_2 \otimes \sO_F)$
are linearly dependent. By Lemma \ref{lemma:dependent} this implies that
$c_1(\sF_1 \otimes \sO_F)=0$ or $c_1(\sF_2 \otimes \sO_F)=0$. Yet $(F, \Delta_F)$ is log Fano
so Proposition \ref{prop:fano-type} yields $\sF_1 \otimes \sO_F$ or $\sF_2 \otimes \sO_F$ has rank zero.
By \eqref{split-relative} this implies
$T_{X/Y} \subset V_2$ or  $T_{X/Y} \subset V_1$.

Arguing by contradiction  assume that $T_{X/Y} \subset V_2$: then
$V_1^* \otimes \sO_F$ is contained in the kernel of the map
$$
\Omega_X^{[1]} \otimes \sO_F = (V_1^* \otimes \sO_F) \oplus (V_2^* \otimes \sO_F) 
\rightarrow \Omega_F^{[1]}.
$$
By Lemma \ref{lemma:dependent} 
we have $c_1(V_1) \in H^1(X, V_1^*)$, so this shows
that $c_1(V_1)|_F \in H^1(F, \Omega_F^{[1]})$ is zero. Yet by assumption $\det V_1$
is $f$-ample, a contradiction.
\end{proof}

\subsection{Descent of the splitting}

Let $X$ be a normal variety, and let $\holom{f}{X}{Y}$ be an equidimensional fibration onto a manifold $Y$. 
Dualising the natural map $\holom{df}{f^* \Omega_Y}{\Omega_X}$ 
we obtain the tangent map 
$$
T_f: T_X \rightarrow f^* T_Y.
$$
Given a distribution $\sF \subset T_X$, we denote by
$$
(T_f(\sF))^{sat} \subset f^* T_Y
$$
the saturation of the subsheaf $T_f(\sF) \subset f^* T_Y$. Therefore we have an exact
sequence
$$
0 \rightarrow (T_f(\sF))^{sat} \rightarrow f^* T_Y \rightarrow Q \rightarrow 0
$$
with $Q$ a torsion-free sheaf. Pushing forward we obtain an exact sequence
$$
0 \rightarrow f_*\left( (T_f(\sF))^{sat}\right) \rightarrow f_* f^* T_Y \simeq T_Y
\rightarrow f_* Q.
$$
Since $f_* Q$ is torsion-free, the quotient $T_Y/f_*\left( (T_f(\sF))^{sat}\right) \subset
f_* Q$ is torsion-free. Thus $f_*\left( (T_f(\sF))^{sat}\right) \subset T_Y$ is saturated, i.e.
a distribution on $Y$.

\begin{lemma} \label{lemma:base-decomposition}
Let $X$ be a normal variety such that $T_X = V_1 \oplus V_2$. Let 
$\holom{f}{X}{Y}$ be an equidimensional fibration  onto a manifold $Y$. Assume that $f$ has  generically reduced fibres in codimension one, i.e. for every prime divisor $B 
\subset Y$ the pull-back $f^*B$ is reduced. If $T_{X/Y} \subset V_i$ for some $i \in \{1, 2\}$, the tangent map induces a decomposition
$$
T_Y \simeq 
f_* ((T_f(V_1))^{sat}) \oplus f_* ((T_f(V_2))^{sat}).
$$
\end{lemma}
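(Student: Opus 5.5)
Without loss of generality $T_{X/Y} \subset V_1$. Set $W_i := f_*((T_f(V_i))^{sat})$. By the discussion preceding the lemma, $W_1, W_2 \subset T_Y$ are distributions, hence reflexive. Since $Y$ is a manifold, hence normal, Lemma \ref{lemma:extend-generic-splitting}-type arguments reduce everything to a big open subset of $Y$. The plan is to show that the natural map $W_1 \oplus W_2 \to T_Y$ is an isomorphism on a big open subset $Y_0 \subset Y$. Because both sides are reflexive, this suffices by \cite[Prop.1.6]{Har80}.

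\emph{Choice of $Y_0$.} I take $Y_0$ such that its complement has codimension $\geq 2$ and, over $Y_0$, the following hold:
\begin{itemize}
\item each prime divisor $B$ meets $Y_0$;
\item $f$ is smooth at the generic point of every component of $f^{-1}(B)$. This uses that $f^*B$ is reduced and $f$ is equidimensional, so every component of $f^{-1}(B)$ is a divisor dominating $B$.
\end{itemize}
Let $X^\circ \subset f^{-1}(Y_0)$ be the locus where $f$ is smooth. It meets every fibre over $Y_0$ (after shrinking $Y_0$ in codimension two). On $X^\circ$ the tangent map $T_f$ is a surjection of vector bundles, with kernel $T_{X/Y}$ a subbundle contained in $V_1$. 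The $V_i$ are locally free there by Lemma \ref{lemma:direct-sum}.

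\emph{Pointwise splitting upstairs.} At $x \in X^\circ$ we have $T_{X,x} = V_{1,x} \oplus V_{2,x}$ and $\ker T_{f,x} = T_{X/Y,x} \subset V_{1,x}$. Hence
$$
f^*T_{Y,x} = T_f(V_{1,x}) \oplus T_f(V_{2,x}),
$$
where $T_f|_{V_2}$ is injective. In particular the ranks of $T_f(V_1)$ and $T_f(V_2)$ are constant on $X^\circ$. So $T_f(V_i)$ are subbundles on $X^\circ$ giving a splitting $f^*T_Y|_{X^\circ} = T_f(V_1)\oplus T_f(V_2)$, and they coincide there with their saturations. Set $S_i := (T_f(V_i))^{sat}$. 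Both are saturated in $f^*T_Y$, and by Lemma \ref{lemma:extend-generic-splitting} applied to the subsheaf $f^*T_Y$ itself, we obtain $f^*T_Y = S_1 \oplus S_2$ on $f^{-1}(Y_0)$. To run this cleanly, I would first observe that $S_1 \cap S_2 = 0$ and $S_1 + S_2 = f^*T_Y$ hold on $X^\circ$. The first fact is then automatic since the $S_i$ are torsion-free. For the second, both sides are reflexive on the normal variety $f^{-1}(Y_0)$ and the complement of $X^\circ$ has codimension $\geq 2$ by the choice of $Y_0$ and equidimensionality. Note that $S_1 \oplus S_2$ is reflexive because $S_i$ are saturated in a reflexive sheaf.

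\emph{Pushing forward.} With $f^*T_Y \simeq S_1 \oplus S_2$ on $f^{-1}(Y_0)$ and $T_Y$ locally free, Remark \ref{remark:direct-sum} gives
$$
T_Y|_{Y_0} \simeq f_*S_1 \oplus f_*S_2 = W_1 \oplus W_2,
$$
compatibly with the inclusions $W_i \subset T_Y$. Extending over the codimension two complement concludes the proof.

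The main obstacle is the codimension estimate for $f^{-1}(Y_0)\setminus X^\circ$. Over the generic point of $Y$ this is fine. Over a divisor $B$, it is exactly here that reducedness of $f^*B$ and equidimensionality are needed: they guarantee that no whole component of $f^{-1}(B)$ lies in the non-smooth locus. Without them the splitting of $f^*T_Y$ could fail along multiple fibres, which is the typical counterexample.
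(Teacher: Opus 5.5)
Your proposal is correct and follows essentially the same route as the paper: on the big open set where $f$ is smooth, $\ker T_f = T_{X/Y} \subset V_1$ gives $f^*T_Y = T_f(V_1) \oplus T_f(V_2)$ as subbundles, the reflexive saturations extend this splitting over $X$ (the paper phrases this step via biduals), and Remark \ref{remark:direct-sum} pushes the splitting down to $Y$. Two harmless remarks. First, the appeal to Lemma \ref{lemma:extend-generic-splitting} with $\sG = f^*T_Y$ does not really apply, since that lemma presupposes a splitting of the ambient sheaf; your subsequent reflexivity argument is what actually does the job. Second, the auxiliary $Y_0$ is superfluous, because the non-smooth locus of $f$ already has codimension at least two in all of $X$.
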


\begin{proof}
In order to simplify the notation, let us assume that $T_{X/Y} \subset V_1$. 
Since $f$ is equidimensional with generically reduced fibres
the locus $Z \subset X$ where $T_f$ is not surjective has codimension at least two in $X$.

The map $T_f$ is a surjective morphism of vector bundles on $X_{\nons} \setminus Z$, so the relative tangent sheaf $T_{X/Y}$ 
is locally free and coincides with the kernel of $T_f$ on $X_{\nons} \setminus Z$.
Therefore the inclusion $T_{X/Y} \subset V_1$ yields an isomorphism
$$
\left( T_f(V_1) \oplus T_f(V_2) \right) \otimes \sO_{X_{\nons} \setminus Z}
\simeq
f^* T_Y \otimes \sO_{X_{\nons} \setminus Z}.
$$
In particular $T_f(V_i) \subset f^* T_Y$ is a subbundle in codimension one
and the saturation $(T_f(V_i))^{sat} \subset f^* T_Y$ is isomorphic
to $(T_f(V_i))^{**}$.
Since $X_{\sing} \cup Z$ has codimension at least two in $X$ and $f^* T_Y$ is locally free (hence reflexive), we can extend the decomposition by taking biduals:
$$
(T_f(V_1))^{sat} \oplus (T_f(V_2))^{sat}
\simeq f^* T_Y.
$$
By Remark \ref{remark:direct-sum} this induces the desired decomposition on $T_Y$.
\end{proof}

Our next goal is to extend Lemma \ref{lemma:base-decomposition} to certain fibrations with
non-reduced fibres, this requires to understand the behaviour of the splitting
under finite covers (cf. \cite[Sect.5.2]{Dru21} for similar statements for weakly regular foliations).

Let $\holom{\eta}{U'}{U}$ be a finite map between complex manifolds.
Let $W \subset T_U$ be a subbundle of rank $n-r$, then we have an exact sequence
$$
0 \rightarrow Q \rightarrow \Omega_U \rightarrow W^* \rightarrow 0
$$
and we denote by 
$Q^{sat} \subset \Omega_{U'}$
the saturation of the image of
$
\eta^* Q \hookrightarrow \eta^* \Omega_U \hookrightarrow \Omega_{U'}.
$
The exact sequence
$$
0 \rightarrow Q^{sat} \rightarrow  \Omega_{U'} \rightarrow S \rightarrow 0,
$$
defines an injective morphism 
$$
\eta^{-1} W:=S^* \hookrightarrow T_{U'}.
$$

\begin{definition} In the situation above, 
we call $\eta^{-1} W \subset T_{U'}$ the lifting of the distribution $W$ to $U'$.
\end{definition}

Consider the special case where we have a holomorphic map
\begin{equation}
\label{define-eta}
\holom{\eta}{U' \simeq \D^n}{U \simeq \D^n},
(z_1, \ldots, z_n) \ \mapsto \ (z_1^k, z_2, \ldots, z_n) = (u_1, \ldots, u_n)
\end{equation}
where $k \geq 2, n \geq 2$ positive integers. 
Denote by 
$B = \{ u_1=0 \} \subset U$ the branch divisor and by $R = \{ z_1=0 \} \subset U'$ the (reduction of the) ramification divisor
of $\eta$.
Note that $\eta|_R: R \rightarrow B$ is an isomorphism.

The next lemma is elementary and well-known to experts, cp. \cite[Lemma 3.4]{Dru21}. Since op.cit. is stated only for foliations we give the computation for completeness sake:

\begin{lemma} \label{lemmalocalcomputation}
In the situation above for a general point $z \in R$ we have
\begin{equation} \label{restricttoramification}
(\eta^{-1} W)_z \cap T_{R,z} = W_{\eta(z)} \cap T_{B,\eta(z)},
\end{equation}
where we identify $T_{R,z}$ and $T_{B,\eta(z)}$ via the isomorphism $\eta|_R$.

Moreover the subsheaf $\eta^* Q  \hookrightarrow \Omega_{U'}$ is saturated, and hence isomorphic
to $Q^{sat}$, if and only if for a general point $u \in B$ we have
$W_u \not\subset T_{B, u}$.
\end{lemma}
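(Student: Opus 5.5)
This is a local computation in coordinates, and I would organise it around the dual picture via annihilators in $\Omega$. Since $W$ is a subbundle, near a general point $u \in B$ I choose a local frame of $Q \subset \Omega_U$, the annihilator of $W$. It has rank $r$ and is spanned by $1$-forms $\omega_1,\ldots,\omega_r$ in $du_1,\ldots,du_n$. Pulling back uses $\eta^*du_1 = k z_1^{k-1}dz_1$ and $\eta^*du_j = dz_j$ for $j\ge 2$. Write $\omega_l = a_l\,du_1 + \sum_{j\ge2} b_{lj}\,du_j$. Then
$$\eta^*\omega_l = k z_1^{k-1}(a_l\circ\eta)\,dz_1 + \sum_{j\ge 2}(b_{lj}\circ\eta)\,dz_j .$$

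I treat the second statement first. The subsheaf $\eta^*Q\subset\Omega_{U'}$ is saturated near a general point of $R$ if and only if the forms $\eta^*\omega_l$ stay linearly independent at the points of $R$. On $R$ the $dz_1$-coefficients vanish, so the restrictions are $\sum_{j\ge2} b_{lj}(\eta(z))\,dz_j$. These are independent if and only if the restriction map $Q_u\to\Omega_{B,u}$ is injective, that is, if and only if no nonzero form in $Q_u$ is a multiple of $du_1$. By annihilator duality, $du_1 \in Q_u$ if and only if $W_u\subset \ker du_1 = T_{B,u}$. This gives the criterion: if $W_u\not\subset T_{B,u}$ for general $u$ then $\eta^*Q$ is a subbundle near general points of $R$. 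Away from $R$ the map $\eta$ is étale, so there $\eta^*Q$ is trivially a subbundle. Hence $\eta^*Q$ is saturated, because the locus where a subsheaf of a vector bundle fails to be a subbundle must contain a divisor if saturation fails. Conversely, if $W_u\subset T_{B,u}$ generically, I arrange the frame so that $\omega_1 = du_1$. Then $\eta^*\omega_1 = kz_1^{k-1}dz_1$ is divisible by $z_1^{k-1}$ with $k\ge2$, so the saturation strictly contains $\eta^*Q$.

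For \eqref{restricttoramification} I split into the same two cases and compute $Q^{sat}$ at general $z\in R$.

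In case (a), $W_u\not\subset T_{B,u}$. By the above $Q^{sat}=\eta^*Q$, so $Q^{sat}_z$ is spanned by the $\sum_j b_{lj}\,dz_j$, none involving $dz_1$. Thus $(\eta^{-1}W)_z=\mathrm{Ann}(Q^{sat}_z)$ contains $\partial_{z_1}$. Its intersection with $T_{R,z}=\{dz_1=0\}$ consists of the $v=\sum_{j\ge2}v_j\partial_j$ killed by all $\sum_j b_{lj}dz_j$. On the other side, $W_u\cap T_{B,u}$ consists of the $v$ with $v_1=0$ killed by all $\omega_l$, and these are the same conditions.

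In case (b), $W_u\subset T_{B,u}$. I take the frame $\omega_1=du_1$ and $\omega_2,\ldots,\omega_r$ with no $du_1$ term, after subtracting multiples of $du_1$. Then $Q^{sat}$ is generated by $dz_1$ and the $\eta^*\omega_l$ for $l\ge2$, which equal $\sum_j b_{lj}\,dz_j$. This is a subbundle because the $\omega_l$ for $l\ge 2$ restrict independently to $B$, which holds since $Q$ is a subbundle. Its annihilator at $z$ is contained in $T_{R,z}$ and is cut out there by the same equations that cut out $W_u$ inside $T_{B,u}=\{du_1=0\}$, so again both sides agree.

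The main obstacle is making the saturation computation rigorous. For general points of $R$ I would justify the frame choices, namely the generic constancy of whether $W_u\subset T_{B,u}$ and the independence of the restricted forms, by semicontinuity on the irreducible divisor $B$. I would also invoke the fact that a saturated subsheaf of a vector bundle is a subbundle outside codimension two.
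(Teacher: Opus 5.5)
Your route is the paper's: you take a local frame of $Q$ in the coordinates $u_i$ and pull it back by $\eta$. You then observe that the value of $\eta^*\omega_l$ along $R$ is the pull-back under $\eta|_R$ of the restriction of $\omega_l$ to $B$, and split into the cases $W_u\not\subset T_{B,u}$ and $W_u\subset T_{B,u}$. Your annihilator formulation $du_1\in Q_u \Leftrightarrow W_u\subset T_{B,u}$ replaces the paper's rank count for the $\bar s_j$. Your first paragraph already proves the saturation criterion in both directions, and case (a) of \eqref{restricttoramification} is correct.

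The gap is the normalisation used in case (b) and in your (redundant) converse. You cannot in general take $\omega_1=du_1$, nor remove the $du_1$-terms of $\omega_2,\ldots,\omega_r$ by subtracting multiples of $du_1$. The hypothesis only gives $du_1(u)\in Q_u$ for $u\in B$. It does not make $du_1$ a section of $Q$ off $B$, and subtracting multiples of $du_1$ leaves $Q$. For example, for $n=2$ and $W=\langle \partial_{u_2}+u_1\partial_{u_1}\rangle$ one has $W_u=T_{B,u}$ along $B$, but $Q$ is generated by $du_1-u_1\,du_2$ and does not contain $du_1$. So the formula $\eta^*\omega_1=kz_1^{k-1}dz_1$ and your list of generators of $Q^{sat}$ are false as stated.

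The repair is exactly the paper's choice $\bar s_1=0$. Since $du_1|_B$ is a section of $Q|_B$, lift it to a section $\omega_1=du_1+u_1\theta$ of $Q$; it is nowhere zero near $B$, so it extends to a frame. Then replace $\omega_l$, $l\ge2$, by $\omega_l-c_l\,\omega_1$ with holomorphic $c_l$ chosen to kill the $du_1$-terms. Now $\eta^*\omega_1=z_1^{k-1}(k\,dz_1+z_1\eta^*\theta)$. Hence near a general $z\in R$ the sheaf $Q^{sat}$ is generated by $k\,dz_1+z_1\eta^*\theta$ and the modified $\eta^*\omega_l$. Their values at $z$ are $k\,dz_1$ and $\sum_{j\ge2}b_{lj}(\eta(z))\,dz_j$, because all correction terms vanish along $R$. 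With these fibre values your annihilator comparison goes through unchanged. It gives $(\eta^{-1}W)_z\subset T_{R,z}$, cut out by the same equations as $W_u\subset T_{B,u}$.
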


\begin{proof}
Let $z \in R$ be an arbitrary point, and let $s_1, \ldots, s_r$ local generators of $Q$ near $\eta(z)$. We can write
$$
s_j = \alpha_j(u_1, \ldots, u_n) d u_1 + \sum_{i=2}^n \beta_{j,i}(u_1, \ldots, u_n) d u_i.
$$
By construction the subbundle $W$ consists in every point $u \in U$ of those $v \in T_{U,u}$ such that
$s_j(v)=0$ for all $j \in \{1, \ldots, k\}$. Denote by $\bar s_j$ the image of $s_j$ under the quotient map
$\Omega_U \otimes \sO_B \rightarrow \Omega_B.$
The kernel of this map is generated by $du_1|_B$, so in local coordinates
$$
\bar s_j = \sum_{i=2}^n \beta_{j,i}(0, u_2, \ldots, u_n) d u_i.
$$
For every point $u \in B$, the intersection $W_{u} \cap T_{B,u}$ consists of those $v \in T_{B,u}$ such that
$\bar s_j(v)=0$ for all $j \in \{1, \ldots, r\}$.
Observe that the family of holomorphic $1$-forms $\bar s_1, \ldots, \bar s_r$ has rank at least $r-1$ near $\eta(z)$ and equality
holds if and only if, up to a linear change of local generators, one has $\bar s_1=0$.
Thus we have
$W_u \subset T_{B, u}$ for every $u$ in a neighbourhood of $\eta(z) \in B$ if and only if
\begin{equation}
\label{vanish-beta}
\beta_{1,i}(0, u_2, \ldots, u_n) \equiv 0
\end{equation}
for $i=2, \ldots, n$.

The subsheaf 
$\eta^* Q \subset \Omega_{U'}$ is generated near $z$ by the pull-backs $\eta^* s_1, \ldots, \eta^* s_r$ which in local coordinates are given by
$$
\eta^* s_j = \alpha_j(z_1^k, \ldots, z_n) \cdot k z_1^{k-1} d z_1 + \sum_{i=2}^n \beta_{j,i}(z_1^k, \ldots, z_n) d z_i.
$$
Denote by $\overline{\eta^* s_j}$ the image of $\eta^* s_j$ under the quotient map
$\Omega_{U'}\otimes \sO_R \rightarrow \Omega_R$.
The kernel of this map is generated by $dz_1|_R$, so in local coordinates
$$
\overline{\eta^* s_j} = \sum_{i=2}^n \beta_{j,i}(0, z_2, \ldots, z_n) d z_i.
$$
Thus $\overline{\eta^* s_j}$ identifies to the pull-back of $\bar s_j$ under the isomorphism $\eta|_R$.

{\em 1st case: assume $W_u \not\subset T_{B, u}$ for $u \in B$ general.}
By the above the family $\bar s_1, \ldots, \bar s_r$ has rank $r$ in the generic point of $B$
and therefore the family $\overline{\eta^* s_1}, \ldots, \overline{\eta^* s_r}$ has rank $r$
in the generic point of $R$. Thus the family
$\eta^* s_1, \ldots, \eta^* s_r$ has rank $r$ in a general point of $R$ and $\eta^* Q \subset \Omega_{U'}$ is saturated.  

The intersection $(\eta^{-1} W)_{z} \cap T_{R,z}$ consists of those $v \in T_{R,z}$ such that
$\overline{\eta^* s_j}(v)=0$ for all $j \in \{1, \ldots, r\}$.
Since $\overline{\eta^* s_j}=(\eta|_R)^* \bar s_j$ the equality \eqref{restricttoramification} follows.

{\em 2nd case: assume $W_u \subset T_{B, u}$ for $u \in B$ general.}
As explained above, we can assume $\bar s_1=0$, so \eqref{vanish-beta} holds.
Therefore the $1$-form
$$
\eta^* s_1 = \alpha_1(z_1^k, \ldots, z_n) \cdot k z_1^{k-1} d z_1 + \sum_{i=2}^n \beta_{1,i}(z_1^k, \ldots, z_n) d z_i.
$$
vanishes along $R$ and $\eta^* Q \subset \Omega_{U'}$ is not saturated.
By \eqref{vanish-beta} the meromorphic $1$-forms
$
\frac{1}{z_1^{k-1}} \cdot \beta_{1,i}(z_1^k, \ldots, z_n)
$
are holomorphic, so 
$$
\frac{1}{z_1^{k-1}} \eta^* s_1, \eta^* s_2, \ldots, 
\eta^* s_r
$$
is a family of holomorphic $1$-forms that generates
$\eta^* Q \subset \Omega_{U'}$ on $U' \setminus R$ and is linearly independent
in a general point of $R$. 
Thus this family generates the saturation $Q^{sat} \subset \Omega_{U'}$.

Note also that $\overline{\frac{1}{z_1^{k-1}} \eta^* s_1} = \overline{\eta^* s_1}=0$,
so \eqref{restricttoramification} follows by the same argument as in the first case.
\end{proof}

\begin{lemma} \label{lemma:descendsplittingfinite}
Let $\holom{\eta}{U'}{U}$ be a finite map between complex manifolds. Let $W_i \subset T_U$ be distributions, and let
$\fibre{\eta}{W_i}$ be their lifting to $U'$. If we have
$
T_{U'} = \fibre{\eta}{W_1} \oplus \fibre{\eta}{W_2},
$
then 
$
T_{U} = W_1 \oplus W_2.
$
\end{lemma}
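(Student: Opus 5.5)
The plan is to compare the natural morphism
$$
\Phi: W_1 \oplus W_2 \rightarrow T_U
$$
with the given splitting on $U'$, first away from the branch locus and then at general points of the branch divisor. We then extend across codimension two by reflexivity. The distributions $W_i$ are saturated in $T_U$, hence reflexive by Remark \ref{remark:saturated}, so $W_1 \oplus W_2$ is reflexive. By \cite[Prop.1.6]{Har80} it therefore suffices to show that $\Phi$ is an isomorphism on a big open subset of $U$. We implicitly take $U'$ and $U$ of the same dimension, so that $\eta$ is surjective and generically étale, which is what makes the rank count below work.

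\emph{Step 1 (away from the branch locus).} Let $B \subset U$ be the branch divisor. On $U' \setminus \eta^{-1}(B)$ the map $\eta$ is étale, so $\eta^* \Omega_U \simeq \Omega_{U'}$ and $\eta^{-1} W_i = \eta^* W_i$. The splitting $T_{U'} = \eta^{-1}W_1 \oplus \eta^{-1}W_2$ is then the pull-back of $\Phi$, so $\Phi$ is an isomorphism on $U \setminus B$ by faithful flatness of étale maps. In particular $\rk W_1 + \rk W_2 = \dim U$, and $\Phi$ is injective as a morphism of sheaves, since its kernel is a torsion-free sheaf that vanishes generically.

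\emph{Step 2 (setting up at the branch divisor).} It remains to treat a general point $u$ of each irreducible component of $B$. After removing a codimension two subset we may assume the following:
\begin{itemize}
\item $W_1, W_2$ are subbundles near $u$;
\item for some $z \in \eta^{-1}(u)$ on a ramification component, $\eta$ has the local form \eqref{define-eta};
\item $u$ and $z$ are general in the sense of Lemma \ref{lemmalocalcomputation}, applied to both $W_1$ and $W_2$.
\end{itemize}
Since the ranks add up to $\dim U$, it is enough to prove $W_{1,u} \cap W_{2,u} = 0$. Arguing by contradiction, let $0 \neq v \in W_{1,u} \cap W_{2,u}$. Then $I := W_{1,u} \cap W_{2,u}$ is a non-zero subspace, and we distinguish two cases.

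\emph{Case (a): $I \cap T_{B,u} \neq 0$.} Take $0 \neq w \in I \cap T_{B,u}$. By \eqref{restricttoramification}, the corresponding vector of $T_{R,z}$ lies in $(\eta^{-1}W_1)_z \cap (\eta^{-1}W_2)_z$. This contradicts the direct sum decomposition on $U'$ at $z$, where both lifts are locally free and form a direct sum of vector spaces (Lemma \ref{lemma:direct-sum}).

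\emph{Case (b): $I \cap T_{B,u} = 0$.} Then $I$ is a line transverse to $B$, so $W_{i,u} \not\subset T_{B,u}$ for both $i$. By the second part of Lemma \ref{lemmalocalcomputation}, $\eta^* Q_i \subset \Omega_{U'}$ is saturated. Hence $\eta^{-1}W_i$ is the annihilator of the forms $\eta^* s_j$, where the $s_j$ are local generators of $Q_i$ as in that lemma. In the local coordinates of \eqref{define-eta} we have $\eta^* s_j(\partial/\partial z_1) = \alpha_j \cdot k z_1^{k-1}$, which vanishes at $z$. Thus $\partial/\partial z_1 \in (\eta^{-1}W_1)_z \cap (\eta^{-1}W_2)_z$, again contradicting the splitting.

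\emph{Step 3 (conclusion).} Steps 1 and 2 show that $\Phi$ is an isomorphism of vector bundles on a big open subset of $U$, and reflexivity then gives $T_U = W_1 \oplus W_2$. The main obstacle is the case distinction at the branch divisor: Lemma \ref{lemmalocalcomputation} controls the lift only in the directions tangent to $B$. The transverse case has to be handled by the separate observation that, when $\eta^*Q_i$ is saturated, both lifts contain the kernel of $d\eta_z$.
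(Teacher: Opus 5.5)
Your proof is correct and follows essentially the same route as the paper. You reduce by reflexivity to a general point of the branch divisor in the local model \eqref{define-eta}, use \eqref{restricttoramification} when the intersection meets $T_{B,u}$, and use the saturatedness of $\eta^*Q_i$ from Lemma \ref{lemmalocalcomputation} in the transverse case. The only cosmetic differences are that you split cases by whether $W_{1,u}\cap W_{2,u}$ meets $T_{B,u}$ rather than whether some $W_{i,u}\subset T_{B,u}$, and that you exhibit the common vector $\partial/\partial z_1$ explicitly where the paper uses a rank count in the image of $\eta^*\Omega_U\otimes\sO_R$.
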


\begin{proof}
The inclusions $W_i \hookrightarrow T_U$ determine a morphism
$W_1 \oplus W_2 \rightarrow T_U$,
our goal is to show that it is an isomorphism.
The statement is trivial in the complement of the branch divisor $B$
and the sheaves $W_i$ and $T_U$ are reflexive, so it is sufficient to show the statement in a general point $u \in B$.  
This property being local, we can replace
$U$ and $U'$ by polydiscs and assume that $\eta$ is given by \eqref{define-eta}.
The point $u \in B$ being general and the subsheaves $W_i \subset T_U$ being saturated,
we can assume that they are subbundles.

{\em Arguing by contradiction we assume that for a general point $u \in B$
we have $W_{1,u} \not\subset T_{B,u}$ and
$W_{2,u} \not\subset T_{B,u}$.} 
By Lemma \ref{lemmalocalcomputation} the subsheaves $\eta^* Q_i \subset \Omega_{U'}$ are saturated, in particular their restrictions
$(\eta^* Q_i)\otimes \sO_R$ are contained in the image of 
$$
(\eta^* \Omega_U)\otimes \sO_R \subset \Omega_{U'}\otimes \sO_R
$$
which has rank $\dim U-1$. Since $\rk Q_1+\rk Q_2= \dim U$, this shows that 
$$
(\eta^* Q_1 \otimes \sO_R) \cap (\eta^* Q_2\otimes \sO_R) \neq 0.
$$
But then their kernels $\fibre{\eta}{W_1}$ and $\fibre{\eta}{W_2}$ intersect non-trivially, a contradiction.

{\em Assume now that, up to renumbering, we have $W_{1,u} \subset T_{B,u}$ for a general point $u \in B$.}
Since $W_{1,u} \subset T_{B,u}$ we have
$$
W_{1,u} \cap W_{2,u} = W_{1,u} \cap W_{2,u} \cap T_{B,u} 
= (W_{1,u} \cap T_{B,u}) \cap (W_{2,u} \cap T_{B,u}).
$$
By \eqref{restricttoramification} we have
$$
(W_{1,u} \cap T_{B,u}) \cap (W_{2,u} \cap T_{B,u})
=
\left((\eta^{-1} W_1)_z \cap T_{R,z}\right) \cap 
\left((\eta^{-1} W_2)_z \cap T_{R,z}\right). 
$$
Yet this intersection is zero since $T_{U'} = \fibre{\eta}{W_1} \oplus \fibre{\eta}{W_2}$.
Thus $W_{1,u} \cap W_{2,u} = 0$ and the morphism $W_1 \oplus W_2 \rightarrow T_U$
is an isomorphism in $u$.
\end{proof}

\begin{proposition} \label{prop:base-decomposition}
Let $X$ be a normal variety such that $T_X = V_1 \oplus V_2$. Let 
$\holom{f}{X}{Y}$ be an equidimensional fibration onto a manifold $Y$
such that for every prime divisor $B \subset Y$ we have $f^* B = k \sum_i D_i$
where $D_i \subset X$ are prime divisors.
If $T_{X/Y} \subset V_i$ for some $i \in \{1, 2\}$, the tangent map induces a decomposition
$$
T_Y \simeq 
f_* ((T_f(V_1))^{sat}) \oplus f_* ((T_f(V_2))^{sat}).
$$
\end{proposition}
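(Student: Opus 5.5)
Since $T_Y$ and the sheaves $W_i := f_*((T_f(V_i))^{sat})$ are reflexive (the $W_i$ are distributions on $Y$ by the discussion preceding Lemma \ref{lemma:base-decomposition}), it is enough to show that the natural map $W_1 \oplus W_2 \rightarrow T_Y$ is an isomorphism on a big open subset of $Y$. On the complement of finitely many prime divisors $B \subset Y$, namely those with $f^*B$ non-reduced, Lemma \ref{lemma:base-decomposition} already applies after restricting $f$ over that open set. So we fix one such divisor $B$ with multiplicity $k \geq 2$ and a general point $u \in B$, and we work locally near $u$. Shrinking, we may take $Y \supset U \simeq \D^n$ with $B = \{u_1 = 0\}$, and we may assume every component $D_i$ of $f^{-1}(B)$ over $U$ is smooth, generically transversal to the fibres, and such that $f$ is smooth off $f^{-1}(B)$ in codimension one.

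The plan is to remove the multiplicity by a base change. Let $\eta : U' \rightarrow U$ be the cyclic cover $(z_1, \ldots, z_n) \mapsto (z_1^k, z_2, \ldots, z_n)$ of \eqref{define-eta}. Let $X'$ be the normalisation of $X_U \times_U U'$, with maps $f' : X' \rightarrow U'$ and $\mu : X' \rightarrow X_U$. Because $f^*B = k \sum D_i$ has uniform multiplicity $k$, the standard computation in local coordinates near a general point of $D_i$, where $f$ is given by $u_1 = x_1^k$, shows two things. First, $\mu$ is quasi-\'etale, i.e.\ \'etale in codimension one. Second, $f'$ is equidimensional and $f'^*R$ is reduced, where $R = \{z_1 = 0\}$. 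Since $\mu$ is quasi-\'etale, the splitting pulls back: $T_{X'} = \mu^{[*]} V_1 \oplus \mu^{[*]} V_2$. We also have $T_{X'/U'} = \mu^{[*]} T_{X/Y} \subset \mu^{[*]} V_i$. Lemma \ref{lemma:base-decomposition} therefore applies to $f'$ and gives a decomposition $T_{U'} = W_1' \oplus W_2'$, where $W_j' = f'_*((T_{f'}(\mu^{[*]} V_j))^{sat})$.

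It remains to identify $W_j'$ with the lifting $\eta^{-1} W_j$ and then to invoke Lemma \ref{lemma:descendsplittingfinite}, which yields $T_U = W_1 \oplus W_2$. Away from $R$, $\eta$ and $\mu$ are \'etale and the tangent maps are compatible, so $W_j'$ and $\eta^{-1} W_j$ coincide on $U' \setminus R$. Both are saturated subsheaves of $T_{U'}$, since the lifting is by construction the dual of a torsion-free quotient and is saturated. Two saturated subsheaves of a reflexive sheaf that agree on a dense open set are equal, hence $W_j' = \eta^{-1} W_j$ on all of $U'$. Finally, the splitting obtained over the big open set extends to $Y$ by reflexivity.

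The main obstacle is the local analysis of the base change: checking that uniform multiplicity $k$ along all components over $B$ makes $\mu$ quasi-\'etale and $f'$ have reduced fibres in codimension one. Equidimensionality must also be preserved. If components had different multiplicities this would fail, so this is precisely where the hypothesis $f^*B = k\sum_i D_i$ is used. I would verify it at a general point of each $D_i$, where $X$ and $f|_{D_i}$ are smooth. There the fibre product is locally $\{x_1^k = z_1^k\}$, whose normalisation splits into $k$ smooth sheets each mapping isomorphically, so $\mu$ is unramified in codimension one.
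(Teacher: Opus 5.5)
Your proposal is correct and follows the paper's proof step by step. You reduce to a general point of a divisor with non-reduced pullback, base change by the cyclic cover \eqref{define-eta} so that $\mu$ is quasi-\'etale and $f'$ has reduced fibres in codimension one, apply Lemma \ref{lemma:base-decomposition} to $f'$, identify the resulting factors with the liftings $\eta^{-1}W_j$, and conclude with Lemma \ref{lemma:descendsplittingfinite}; along the way you also make explicit two points the paper leaves implicit, namely $T_{X'/U'} = \mu^{[*]}T_{X/Y} \subset \mu^{[*]}V_i$ and the saturatedness argument identifying $W_j'$ with $\eta^{-1}W_j$.
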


\begin{proof}
Let $Y_0 \subset Y$ be the locus over which $f$ has reduced fibres. By Lemma \ref{lemma:base-decomposition} we have decomposition
\begin{equation}
\label{iso-open}
T_{Y_0} \simeq 
\left(f_* ((T_f(V_1))^{sat}) \otimes \sO_{Y_0}\right) \oplus \left(f_* ((T_f(V_2))^{sat}) \otimes \sO_{Y_0}\right),
\end{equation}
so we only have to show that the map
$$
f_* ((T_f(V_1))^{sat}) \oplus f_* ((T_f(V_2))^{sat}) \rightarrow T_Y.
$$
is an isomorphism in codimension one, i.e. we only have to show that the isomorphism \eqref{iso-open} extends
as an {\em isomorphism} in codimension one.
This is a local problem, so for a general point $y \in Y \setminus Y_0$ choose a polydisc $U \subset Y$ such that $B:= Y \setminus Y_0$ is the hyperplane $y_1=0$. By assumption
we have $f^* B = k \sum_i D_i$, so the multiplicity $k$ does not depend on the divisor $D_i$.
Let $\holom{\eta}{U'}{U}$ be the morphism \eqref{define-eta}, and let
$X'$ be the normalisation of the fibre product $X \times_U U'$.
Then a local computation shows that the morphism
$$
\holom{\nu}{X'}{X}
$$
is quasi-\'etale and the induced fibration
$$
\holom{f'}{X'}{U'}
$$
has generically reduced fibres in codimension one. Since $\nu$ is quasi-\'etale we have
a decomposition
$$
T_{X'} \simeq \nu^{[*]} T_X = \nu^{[*]} V_1 \oplus \nu^{[*]} V_2.
$$
By Lemma \ref{lemma:base-decomposition} we have an induced decomposition
$$
T_{U'} = f'_* ((T_{f'}(\nu^{[*]} V_1))^{sat}) \oplus f'_* ((T_{f'}(\nu^{[*]} V_2))^{sat}). 
$$
On $U' \setminus \fibre{\eta}{B}$ this decomposition coincides with the pull-back 
of the decomposition \eqref{iso-open} by $\eta$, in particular
$f'_* ((T_{f'}(\nu^{[*]} V_i))^{sat})$ is the lifting of the distribution
$W_i := f_* ((T_f(V_i))^{sat})$ to $U$.
Conclude with Lemma \ref{lemma:descendsplittingfinite}.
\end{proof}

\begin{proposition} \label{prop:descend-MFS}
Let $X$ be a normal $\Q$-factorial projective variety with klt singularities
such that $T_X=V_1 \oplus V_2$.
Let $\holom{f}{X}{Y}$ be an elementary $K_X$-negative contraction of fibre type
such that $\det V_i$ is $f$-ample.
Let $Y_0 \subset Y_{\nons}$ be the locus where $f$ is equidimensional,
and let $\holom{f}{X_0}{Y_0}$ be the restriction of $f$ over this set.
Then we have a
decomposition $T_Y \simeq W_1 \oplus W_2$ extending the decomposition
$$
T_{Y_0} \simeq 
f_* ((T_f(V_1 \otimes \sO_{X_0}))^{sat}) \oplus f_* ((T_f(V_2 \otimes \sO_{X_0}))^{sat}).
$$
Moreover $V_i$ is integrable (with algebraic leaves) if $W_i$ is integrable (with algebraic leaves).
\end{proposition}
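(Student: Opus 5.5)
First, Proposition \ref{prop:inclusion-rel-tangent} gives $T_{X/Y} \subset V_i$, say $i=1$. Next I reduce to Proposition \ref{prop:base-decomposition} over a big open subset of $Y$. Since $f$ is an elementary contraction of a $\Q$-factorial klt variety, $Y$ is normal and klt, hence smooth in codimension one. The non-equidimensional locus of $f$ has codimension at least two in $Y$, since over a prime divisor of $Y$ the preimage has codimension one and its components dominate that divisor. So $Y_0$ is big in $Y$.

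The multiplicity hypothesis of Proposition \ref{prop:base-decomposition} needs care. Because $f$ is elementary with $\rho(X/Y)=1$ and $X$ is $\Q$-factorial, for every prime divisor $B \subset Y$ the divisor $f^{-1}(B)$ is irreducible. Indeed, if $f^*B = \sum m_j D_j$ had two components, then a single $D_j$ would be $f$-numerically trivial without being a pull-back (it is not $f$-exceptional of fibre type), contradicting the cone theorem / $\rho(X/Y)=1$. Thus over a general point of each $B$ we have $f^*B = kD$, so after shrinking $Y_0$ by a codimension-two set the hypothesis holds. Proposition \ref{prop:base-decomposition} then yields the decomposition of $T_{Y_0}$. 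Set $W_i$ to be the reflexive extensions (pushforwards from $Y_0$) of the two factors. Since $T_Y$ is reflexive and $Y_0$ is big, the isomorphism extends, giving $T_Y \simeq W_1 \oplus W_2$ by \cite[Prop.1.6]{Har80}. The $W_i$ are saturated, since on $Y_0$ they are direct factors.

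For the integrability statement:
\begin{itemize}
\item For $i=1$: $V_1$ contains $T_{X/Y}$, and over a general point $V_1 = T_f^{-1}(f^*W_1)$, because $T_f(V_1)\subset f^*W_1$ with rank count $\rk V_1 = \rk T_{X/Y} + \rk W_1$. If $W_1$ is integrable, then $V_1$ is the pull-back foliation $f^{-1}W_1$, which is integrable since the pull-back of a foliation is closed under bracket (checked on the smooth locus of $f$ in local coordinates). Its leaves are the preimages of leaves of $W_1$, so they are algebraic if the leaves of $W_1$ are.
\item For $i=2$: $V_2 \cap T_{X/Y} = 0$, so $T_f$ maps $V_2$ isomorphically onto $f^*W_2$ at general points. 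If $W_2$ is integrable, then the distribution $V_1 \cap f^{-1}W_2 = T_{X/Y}$ is integrable. Since $f^{-1}W_2 = T_{X/Y} \oplus V_2$ is integrable, it is a foliation whose leaves are $f^{-1}$ of leaves of $W_2$. I do not see a cheap way to get integrability of $V_2$ directly. Instead I would invoke \cite[Thm.1.4]{a1}-type reasoning as quoted in the introduction (integrability of one direct factor gives integrability of both with algebraic leaves), applied to $V_1 = f^{-1}W_1$. This requires $W_1$ integrable, which in fact follows from $W_2$ being integrable by the same cited result on $Y$, a variety of the relevant type. Alternatively, \cite[Thm.2]{CH24} could be used: the $f$-ampleness of $\det V_1$ relates to $\det V_1^*$.
\end{itemize}

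\textbf{Main obstacle.} I expect the hardest step to be the case $i=2$, namely transferring integrability from $W_2$ to $V_2$, which is a transversal, horizontal distribution. A second delicate point is establishing irreducibility of $f^*B$ so as to get the uniform multiplicity $k$.
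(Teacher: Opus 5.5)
Your argument for the index whose determinant is $f$-ample (your $i=1$) is correct and is the paper's proof. Proposition \ref{prop:inclusion-rel-tangent} gives $T_{X/Y}\subset V_1$. Because $f$ is elementary, it contracts no divisor and $f^{-1}(B)$ is irreducible for every prime divisor $B\subset Y$, so Proposition \ref{prop:base-decomposition} applies over the big open set $Y_0$. The splitting then extends reflexively, and $V_1$ is the pull-back of $W_1$ whenever $W_1$ is a foliation.

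The gap is your second bullet, which comes from misreading the last sentence of the statement. The index $i$ there is the one fixed by the hypothesis that $\det V_i$ is $f$-ample, and nothing is claimed for $V_{3-i}$ (see the remark after the statement). The claim you try to prove for $V_{3-i}$ is false.

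Take Example \ref{example-not-integrable}. There $f\colon X\to Y$ is an elementary $K_X$-negative contraction of fibre type of a $\Q$-factorial klt variety: one checks $\rho(X/Y)=1$ because $N^1(X)$ is the invariant part of $N^1(S)\oplus N^1(\PP^2)$, and $-K_X$ is $f$-ample since $\eta$ is quasi-\'etale. Moreover $V_1=T_{X/Y}$ has $f$-ample determinant, since it restricts to $\sO_{\PP^2}(3)$ on a general fibre. Then $W_1=0$ and $W_2=T_Y$, which is integrable with algebraic leaves, while $V_2=\sF$ is not integrable.

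Accordingly, both tools you propose fail. \cite[Thm.1.4]{a1} is a statement about manifolds, and the same example shows it has no klt analogue: $T_{X/Y}$ is algebraically integrable but its complementary factor is not. \cite[Thm.2]{CH24} would need $\det V_1^*$ to be pseudoeffective. That is impossible here, because $\det V_1^*$ is negative on curves in the general fibres of $f$, which form a covering family.

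So the ``main obstacle'' you identify is not an obstacle but a false claim. Delete the second bullet and the accompanying paragraph, and what remains is a complete proof along the same lines as the paper's.
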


\begin{remark*}
It is not difficult to see that $\det V_i$ $f$-ample implies that
$\det V_{3-i}$ is $f$-trivial. So while $f$ may allow us to prove
the integrability of $V_i$, it doesn't give any information about $V_{3-i}$.
\end{remark*}

\begin{proof}
In order to simplify the notation assume that $\det V_1$ is $f$-ample. By Proposition 
\ref{prop:inclusion-rel-tangent} we have $T_{X/Y} \subset V_1$.
Since $f$ is an elementary contraction we know that it does not contract a divisor and
for every prime divisor $B \subset Y$ the support of the pull-back $f^* B$ is a prime divisor. In particular $f$ satisfies the conditions of Proposition \ref{prop:base-decomposition} and we obtain a decomposition 
$$
T_{Y_0} \simeq 
f_* ((T_f(V_1 \otimes \sO_{X_0}))^{sat}) \oplus f_* ((T_f(V_2 \otimes \sO_{X_0}))^{sat}).
$$
Let $\holom{j}{Y_0}{Y}$ be the inclusion.
Since $Y_0 \subset Y$ is big 
the sheaf
$$
W_i := j_* f_* ((T_f(V_i \otimes \sO_{X_0}))^{sat}) 
$$
is a reflexive sheaf on $Y$ \cite[Prop.1.6]{Har80}. The tangent sheaf $T_Y$ being reflexive the decomposition on $Y_0$
extends to a decomposition  $T_Y \simeq W_1 \oplus W_2$.

If $W_1 \subset T_Y$ is integrable, the distribution $V_1 \subset T_X$
is the pull-back (in the sense of \cite[Sect.3.2]{Dru21}) of the foliation $W_1$ to $X$, in particular $V_1$ is integrable. Moreover a general leaf of $V_1$ coincides in its generic point with the preimage of a general leaf of $W_1$. Thus if $W_1$ has algebraic leaves, so has $V_1$.
\end{proof}

\section{Proofs of the main results}
\label{section:proofs}

We start with an introductory remark: 

\begin{remark} \label{remark:MMP-induced splitting}
Let $X$ be a normal projective variety, and let
$$
\merom{\psi}{X}{X'}
$$
be a birational map to a normal projective variety $X'$ such that $\psi$ is the composition
of birational morphisms (e.g. divisorial contractions) and rational maps that are isomorphisms
in codimension one (e.g. flips and flops). Then there exists a Zariski open subset $X_0 \subset X$
and a {\em big} open subset $X'_0 \subset X'$ such that we have an isomorphism
$X_0 \simeq X'_0$. Let $\holom{j}{X'_0}{X'}$ be the inclusion, then a decomposition $T_X = V_1 \oplus V_2$ induces a decomposition
$$
T_{X'} = V_1' \oplus V_2' := j_* (V_1 \otimes \sO_{X_0}) \oplus j_* (V_2 \otimes \sO_{X_0}).
$$
Moreover $V_i' \subset T_{X'}$ is integrable (with algebraic leaves) if and only if this holds
for $V_i \subset T_X$.
\end{remark}

\begin{proof} [Proof of Theorem \ref{thm:main}]
Since $X$ is klt there exists a small modification $\holom{\mu}{X'}{X}$ that is $\Q$-factorial \cite[Cor.1.37]{Kol13} and $X'$ is of Fano type \cite[Lemma 2.8(iii)]{PS09}.
Since $\mu$ is small we can use Remark \ref{remark:MMP-induced splitting} to see that it suffices to prove the statement for $X'$. In order to simplify the notation we will assume without loss of generality that $X$ is $\Q$-factorial. Since $X$ is of Fano type we know by \cite[Cor.1.3.2]{BCHM10} that $X$ is a Mori dream space. 

We will prove the statement by induction on the dimension, the first case being 
a klt surface such that $T_X= L_1 \oplus L_2$ where $L_i$ is a reflexive sheaf of rank one.
In this case the integrability of $L_i$ is clear, moreover the canonical sheaf $K_{L_i} = L_i^*$ is not pseudoeffective by Proposition \ref{prop:fano-type}. Thus the leaves
are algebraic \cite[Cor.II.4.2]{McQ08}.

Fix now $i \in \{1, 2\}$, our goal is to show that $V_i \subset T_X$ is integrable.
By Proposition \ref{prop:fano-type} the determinant $\det V_i^*$ is not pseudoeffective. 
Since $X$ is a Mori dream space we can run a terminating $\det V_i^*$-MMP \cite[Prop.1.11(1)]{HK00}
$X \dashrightarrow X'$ that ends with a Mori fibre space
$$
\holom{f}{X'}{Y}
$$
such that $\det V_i$ is $f$-ample. 
By Remark \ref{remark:MMP-induced splitting} the splitting $T_X=V_1 \oplus V_2$
induces a splitting $T_{X'}=V_1' \oplus V_2'$.
The MMP $X \dashrightarrow X'$ is a composition
of divisorial contraction and flips, so by \cite[Lemma 2.8(ii)]{PS09}
we know that $X'$ is of Fano type. Applying \cite[Lemma 2.8(i)]{PS09} to $f$ we also obtain
that $Y$ is Fano type.

Since $X'$ is of Fano type the extremal ray $R \subset \NEX$ contracted by $f$ is $K_{X'}$-negative, i.e. $f$ is also a Mori contraction in the classical sense. Thus we can apply
Proposition \ref{prop:inclusion-rel-tangent} to obtain $T_{X'/Y} \subset V_i'$.
If the inclusion is an equality we have shown the algebraic integrability of $V_i'$.

Assume now that the inclusion is strict.
Then Proposition \ref{prop:base-decomposition} yields a non-trivial
decomposition $T_Y = W_1 \oplus W_2$. Since $\dim X' > \dim Y$ the induction hypothesis
applies to $Y$ and yields the algebraic integrability of $W_1$ and $W_2$. 
In particular we know by Proposition \ref{prop:inclusion-rel-tangent} that
$V_i' = \fibre{f}{W_i}$ is algebraically integrable.
\end{proof}

We have a weak version of Conjecture \ref{conj:integrable} for singular varieties:

\begin{proposition}
\label{prop:weak}
Let $X$ be a normal projective variety with klt singularities such that
$T_X=V_1 \oplus V_2$. If $K_X$ is not pseudoeffective there exists an algebraically integrable
subsheaf $\sF \subset T_X$ such that $\sF \subset V_i$ for $i=1$ or $2$.
\end{proposition}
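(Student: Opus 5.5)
Since $K_X$ is not pseudoeffective, the strategy is to run a $K_X$-MMP that ends with a Mori fibre space, and to take for $\sF$ the relative tangent sheaf of that fibration, transported back to $X$.

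\textbf{Step 1: reduce to the $\Q$-factorial case.} Since $X$ is klt, I take a small $\Q$-factorial modification $\mu\colon X'\to X$ \cite[Cor.1.37]{Kol13}. By Remark \ref{remark:MMP-induced splitting} the splitting transfers to $X'$, together with the inclusion and integrability properties of subsheaves. Since $\mu$ is small, $K_{X'}=\mu^*K_X$ is still not pseudoeffective. So I may assume $X$ is $\Q$-factorial.

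\textbf{Step 2: run the MMP.} By \cite[Cor.1.3.3]{BCHM10}, a $K_X$-MMP with scaling terminates in a Mori fibre space $\varphi\colon X''\to Y$, with $X\dashrightarrow X''$ a composition of divisorial contractions and flips. Remark \ref{remark:MMP-induced splitting} gives an induced splitting $T_{X''}=V_1''\oplus V_2''$. Moreover $X''$ is $\Q$-factorial and klt, and $\varphi$ is an elementary $K_{X''}$-negative contraction of fibre type.

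\textbf{Step 3: show $T_{X''/Y}$ lies in one factor.} This is the main obstacle, because Proposition \ref{prop:inclusion-rel-tangent} assumes that some $\det V_i''$ is $\varphi$-ample, and that is not available here. Instead I rerun its first part directly.
\begin{itemize}
\item The general fibre $F$ is a klt Fano variety, hence rationally connected, and $H^0(F,\Omega_F^{[1]})=0$. Proposition \ref{propositionungeneric} then gives $T_{X''/Y}=\sF_1\oplus\sF_2$ with $\sF_j=T_{X''/Y}\cap V_j''$.
\item Restricting to $F$ via Corollary \ref{cor:bertini} gives $T_F=(\sF_1\otimes\sO_F)\oplus(\sF_2\otimes\sO_F)$. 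Both determinants are $\Q$-Cartier, and since $\varphi$ is elementary they are proportional in $\pic(F)\otimes\Q$.
\item Lemma \ref{lemma:dependent} then says one of the Chern classes vanishes. Proposition \ref{prop:fano-type} applied to the Fano type variety $F$ forces that summand to have rank zero.
\end{itemize}
Hence $T_{X''/Y}\subset V_i''$ for some $i$. Note that none of this uses ampleness of $\det V_i''$; that hypothesis only enters the later part of Proposition \ref{prop:inclusion-rel-tangent} that decides which $i$ occurs.

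\textbf{Step 4: transport back to $X$.} The sheaf $T_{X''/Y}$ is algebraically integrable, since its leaves are the fibres of $\varphi$. The map $X\dashrightarrow X''$ restricts to an isomorphism $X_0\simeq X''_0$ with $X_0\subset X$ open and $X_0''\subset X''$ big. I take $\sF\subset T_X$ to be the saturation of the extension of $T_{X''/Y}|_{X''_0}$ transported to $X_0$. Then:
\begin{itemize}
\item $\sF$ is algebraically integrable, because its leaves are the strict transforms of the fibres of the rational map $\varphi\circ(X\dashrightarrow X'')$, a foliation induced by a rational map.
\item $\sF\subset V_i$, because the inclusion holds on the dense open set $X_0$, $V_i$ is saturated, and both sheaves are reflexive.
\end{itemize}

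The one point requiring care is that $X_0$ need not be big in $X$, because of the divisorial contractions. This is harmless: saturation and the inclusion into a saturated subsheaf only require agreement at the generic point.
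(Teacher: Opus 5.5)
Your proof is correct and follows the paper's argument: reduce to the $\Q$-factorial case, run a $K_X$-MMP ending in a Mori fibre space, and show that the relative tangent sheaf lies in one of the $V_i''$. The one difference is how you get that inclusion. The paper gets the hypothesis of Proposition \ref{prop:inclusion-rel-tangent} directly: $-K_{X''}=\det V_1''+\det V_2''$ is positive on the contracted ray, so some $\det V_i''$ is $\varphi$-ample. You instead observe, correctly, that the first half of that proof never uses ampleness. Your Step 4 also spells out the transport back to $X$ (where $X_0$ need not be big), which the paper leaves implicit.
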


Note that we do not claim that such a subsheaf exists for $i=1$ and $2$.

\begin{proof}
As in the proof of Theorem \ref{thm:main} we see that we can assume without loss of generality
that $X$ is $\Q$-factorial.
Since $K_X$ is not pseudoeffective we can run by \cite[Cor.1.3.3]{BCHM10} a terminating $K_X$-MMP
$X \dashrightarrow X'$ 
that ends with a Mori fibre space
$$
\holom{f}{X'}{Y}
$$
such that $-K_X$ is $f$-ample.
By Remark \ref{remark:MMP-induced splitting} the splitting $T_X=V_1 \oplus V_2$
induces a splitting $T_{X'}=V_1' \oplus V_2'$. In particular $-K_{X'} = \det V_1' + \det V_2'$,
so if $\R^+ \Gamma$ is the extremal ray contracted by $f$ we have $\det V_i' \cdot \Gamma>0$
for $i=1$ or $2$. Since the contraction is elementary this implies that $\det V_i'$ is $f$-ample. 
Thus we can apply
Proposition \ref{prop:inclusion-rel-tangent} to obtain $T_{X'/Y} \subset V_i'$.
\end{proof}

We conclude with an ``equivariant'' version of Beauville's example \cite[Sect.2.2]{Bea98}:

\begin{example} \label{example-not-integrable}
Let $\zeta$ be a third root of unity, and let $E:= \C/(\Z \oplus \Z \zeta)$, so the action of
$\zeta$ on the elliptic curve $E$ gives an automorphism $i_E$ of order three. The automorphism
$i_S := (i_E, i_E)$ acts on $S:= E \times E$. We denote by $\frac{\partial}{\partial z_i}$
a basis of $H^0(S, T_S)$ and note that $(i_S)_*   \frac{\partial}{\partial z_i}
= \zeta \frac{\partial}{\partial z_i}$. Thus if 
$Y := S/\langle i_S \rangle$ the quotient, the vector fields $\frac{\partial}{\partial z_i}$
do not descend to $Y$ but we have a decomposition
$$
T_Y \simeq M \oplus M
$$
where $M$ is a reflexive sheaf such that $M^{[3]} \simeq \sO_Y$. The surface
$Y$ has non-canonical klt singularities of type $\frac{1}{3}(1,1)$.
Since $K_Y \equiv 0$ the the minimal resolution $Y' \rightarrow Y$ is uniruled.
Since $h^1(Y', \sO_{Y'})=h^1(Y, \sO_Y)=0$ we deduce that $Y'$ and hence $Y$ is rationally connected.

The matrix 
$$
\begin{pmatrix}
\zeta & 0 & 0 \\
0 & \zeta^2 & 0 \\
0 & 0 & 1
\end{pmatrix}
$$
defines an automorphism $i_{\PP^2}$ of order three on $\PP^2$. Let $x_1, x_2, x_3$ be linear coordinates on $\C^3$ and set
$$
s_1 := x_3 \frac{\partial}{\partial x_1} + x_1 \frac{\partial}{\partial x_2}, \qquad
s_2 := x_2 \frac{\partial}{\partial x_3}.
$$
An elementary computation shows that $(i_{\PP^2})_* s_i = \zeta s_i$
and the Lie bracket $[s_1, s_2]$ does not vanish.

Consider now $X' := S \times \PP^2$. The vector fields 
$v_i := \frac{\partial}{\partial z_i} + s_i$
are linearly independent in every point, so they define a trivial subbundle $V \subset T_{X'}$
and $T_{X'} = V \oplus T_{X'/S}$.
One checks easily that $[v_1, v_2] = [s_1, s_2]$ is not a section of $V$ so
the distribution $V \subset T_{X'}$ is not integrable. Note also that
$$
(i_S \times i_{\PP^2})_* v_i = \zeta v_i,
$$
so the subbundle $V = \langle v_1, v_2 \rangle \subset T_{X'}$ is invariant under this automorphism.

Set $X := X'/\langle i_S \times i_{\PP^2} \rangle$, then $X$ is normal $\Q$-factorial variety with klt singularities. The fixed locus of the automorphism
$i_S \times i_{\PP^2}$ has codimension two, so the quotient map $\holom{\eta}{X'}{X}$ is 
quasi-\'etale and $T_{X'} \simeq \eta^{[*]} T_X$. 
Since $V \subset T_{X'}$ is invariant under the group action it descends to a distribution
$\sF \subset T_X$. The group action is diagonal, so the projection $X' \rightarrow S$ 
induces a fibration $\holom{f}{X}{Y}$
such that we have a splitting
$$
T_X = \sF \oplus T_{X/Y}.
$$
The general $f$-fibre is isomorphic to $\PP^2$. Since $Y$ is rationally connected we know by Tsen's theorem that $X$ is rationally connected. Since $\eta^{[*]} \sF = V \subset T_{X'}$ is not integrable, neither is $\sF \subset T_X$. 
\end{example}

\end{document}